%% file: main.tex
\documentclass{amsart}[11pt]
\input{packages}
\input{commands}

\title{Strong rate for one-dimensional driftless continuous strict local martingales}

\author{Antoine Jacquier}
\address{Department of Mathematics, Imperial College London}
\email{a.jacquier@imperial.ac.uk}
\date{\today}
\thanks{The authors would like to thank Eduardo Abi Jaber, Jean-Francois Chassagneux, Masaaki Fukasawa and Kostas Kardaras for enlightening discussions and Thibault Jeannin's presentation at the 2026 London-Paris Bachelier workshop where this question arose.}
\keywords{Euler scheme, strict local martingale, strong rate}
\subjclass{65C05,65C30,60H35}

\begin{document}
\maketitle
\begin{abstract}
For one-dimensional
strict local martingales of the form
$\D X_t=\sigma(X_t) \D W_t$ starting from $X_{0}=x_{0}>0$,
we prove the strong rate
$\EE\left[|X^h_N-X_T|\right] \lesssim h^{\min\{\theta\alpha,\half\}}$ 
for a discrete scheme~$X^h$
truncated at a level
$h^{-\alpha}$, with time step~$h$,
as $h\downarrow 0$,
where~$\theta$ depends on the truncation property.
For the CEV model $\sigma(x)\equiv x^{1+\gamma}$ in particular, 
the rate is~$\half$ for any $\gamma \in (0,1)$ (up to a logarithmic pre-factor at $\gamma=\half$).
\end{abstract}
\tableofcontents

\section{Introduction}

In the Fundamental Theorem of Asset Pricing~\cite{delbaen1994constant,delbaen2006general}, the discounted price of a traded asset is required to be a true martingale under the risk-neutral measure. 
When it is not so, in particular when it is a strict local martingale,  major issues arise: Put-Call parity fails, European Call prices lie strictly below their intrinsic values for small strikes 
(leading to undefined implied volatilities) and the pricing PDE may admit several solutions,
as explained in~\cite{cox2005local,ekstrom2009bubbles,heston2007options,jacquier2018implied,jarrow2010asset,kardaras2015strict,pal2010analysis}. 
Such models are standard
descriptions of financial bubbles~\cite{protter2013mathematical},
but may also arise
unintentionally, 
in the case of mis-calibrated (local and/or stochastic) volatility models as in the CEV model~\cite{carr2006jump,cox1996constant,davydov2001pricing},
the SABR model~\cite{hagan2015probability} and other stochastic volatility models with moment explosions~\cite{andersen2007moment,guyon26martingale,sin1998complications}.

For a one-dimensional driftless diffusion $\D X_t = \sigma(X_t)\D W_t$ the strict local martingale
property is equivalent to 
$\int^{\infty} x\sigma(x)^{-2}\D x<\infty$, as devised by Delbaen and Shirakawa~\cite{delbaen2002no}
and Mijatovi\'c and Urusov~\cite{mijatovic2012martingale}, in which case the martingale defect reads 
$x_0-\EE[X_T] = x_0\QQ(\tau_\infty\le T)$ for the (F\"ollmer) measure~$\QQ$ under which the process~$X$ explodes~\cite{carr2014hedging,kardaras2015strict}.
A typical behaviour of such processes is that they `take large values' more often than their true martingale counterparts.
Because of this, 
two distinct obstructions appear when using the usual Euler-Maruyama scheme.
The first is specific to strict local martingales: 
the Euler iterates
$X_{n+1}=X_n+\sigma(X_n)\Delta_n$ form a discrete martingale, 
so $\EE[X_N]=x_0$ for
every step size, whereas $\EE[X_T]<x_0$; hence
$\|X_N-X_T\|_{L^1}\ge x_0-\EE[X_T]>0$ and the scheme cannot converge in~$L^1$. 
The second obstruction is the super-linear growth of~$\sigma$,
so that moments of the Euler iterates diverge in finite
time~\cite{hutzenthaler2011strong}.
The different proposed schemes,
tamed~\cite{hutzenthaler2012strong,sabanis2013note}, 
truncated~\cite{mao2015truncated,mao2016convergence},
implicit~\cite{alfonsi2013strong,neuenkirch2014first},
adaptive
step-sizes~\cite{fang2020adaptive,kelly2018adaptive} ,
work under some coercivity
condition 
or some dominating Lyapunov function arguments. 
However, these conditions
fail for our canonical example $\sigma(x)\equiv x^{1+\gamma}$($\gamma>0$),
as moments greater than~$1$ are infinite. 
Explicit schemes for locally Lipschitz coefficients
without such moment assumptions converge in probability~\cite{hutzenthaler2011convergence} 
but  carry no rate, 
and the lower bounds in~\cite{hefter2019lower} show
that for non-Lipschitz~$\sigma$,
arbitrarily slow strong rates can be unavoidable, 
so that rates below the usual~$\half$ are to be expected.

The closest general framework is the monograph by Hutzenthaler and
Jentzen~\cite{HutzenthalerJentzen15}, 
who observed that the Euler iterates nevertheless satisfy
Lyapunov-type inequalities on the complement of rare events and 
that the probability of leaving this region decays polynomially in the step, which allows to obtain uniform moment bounds for tamed Euler schemes,
but stressing that no rate can be expected at this level of generality. 
Our scheme shares 
this approach by restricting the analysis to high-probability
events and letting the admissible region grow as a negative power of the step, 
which is also the idea behind Mao's truncated scheme~\cite{mao2015truncated,mao2016convergence}.
It differs from~\cite{HutzenthalerJentzen15} though in three aspects:
(i) the admissible Lyapunov
function~$V$ in~\cite{HutzenthalerJentzen15} must satisfy~$V(x)\ge|x|^q$,
that is
$V(x)\equiv 1+x^q$ with $q\le1$ for the CEV model, and the convergence threshold in~\cite[Theorem~3.15]{HutzenthalerJentzen15}  becomes non-positive, 
so no strong convergence holds;
(ii) we do not
tame the increments but kill the scheme at a level~$R_h$ tied to the step;
(iii) we obtain an explicit rate
$\EE[|X^h_N-X_T|]\lesssim h^{\theta\alpha}$, 
with~$\theta$ 
the tail exponent of 
the
truncation error $\Tt(R)=\EE[X_T\ind_{\{\tau_R\le T\}}]$
and~$\gamma$ the growth exponent of~$\sigma$.

There is also a vast literature on killed schemes at a boundary,
for example Gobet~\cite{gobet2000weak,gobet2004exact}
has a weak exact rate~$\half$
and a weak rate~$1$ can be recovered by shifting boundary techniques~\cite{gobet2001euler,gobet2010stopped},
also obtained by 
\c{C}etin~\cite{ccetin2018diffusion} and \c{C}etin and Hok~\cite{ccetin2024speeding} without boundary correction.
All these results concern the weak error
$|\EE[f(X^h_N)]-\EE[f(X_T)]|$ for bounded or smooth~$f$ and fixed domain, whereas our interest is the strong error for the linear payoff with a domain growing
with the time step~$h$. Closer to our purpose, Song and Yang~\cite{song2015approximating} approximate
$\EE[f(X_T)]$ for a strict local martingale by killing~$X$ at a level $R\uparrow\infty$,
and show that the killed functional converges with an explicit rate for a class of functions excluding linear payoff 
(where martingale
defect actually matters).
The link between the
truncation level and the time step, which we heavily exploit here and which produces the final rate, goes back to Mao~\cite{mao2015truncated},
and some of our technical tools are further inspired by Protter, Qiu and San
Mart\'in~\cite{protter2020asymptotic}.

For the continuous driftless one-dimensional strict local martingale
$\D X_t = \sigma(X_t)\D W_t$ starting from $X_0=x_0>0$ 
and an Euler scheme killed upon leaving $(0,R_h)$, $R_h=h^{-\alpha}$,
we prove the strong rate
$$
\EE\left[\left|
X^h_N-X_T\right|\right]
\lesssim h^{\theta\alpha}\ell(h^{-\alpha}) + \sqrt{h},
$$
where $\theta\in(0,1]$ is the decay exponent of the truncation error, $\gamma>0$ the growth
exponent of~$\sigma$ at infinity,
$\ell$ a slowly varying correction,
and we always write~$\lesssim$ to denote inequality up to some multiplication factor independent of~$h$. 
This is the first strong rate for a discretisation scheme of a strict local
martingale; it extends immediately to~$L^p$-norms and to
Lipschitz payoffs such as European Calls,
and we provide an explicit computation of~$\theta$ for the CEV model.
In Section~\ref{sec:setting} we introduce the model, the truncation set-up and the
discretisation scheme and state the main result; Section~\ref{sec:CEV} works out the CEV
model and provides numerical evidence,
while Section~\ref{sec:proofs} gathers the proofs.

 
\section{Setting and main results}\label{sec:setting}
We consider the one-dimensional stochastic differential equation
\begin{equation}\label{eq:sde}
  \D X_t=\sigma(X_t) \D W_t,\qquad X_{0}=x_{0}>0,
\end{equation}
on a filtered probability space $(\Omega,\Ff,(\Ff_t)_{t\ge0},\PP)$ satisfying the usual conditions, 
with~$W$ a one-dimensional $(\Ff_t)$-Brownian motion, and fix a time horizon $T>0$,
and the following assumptions on the diffusion coefficient~$\sigma$:
\begin{assumption}\label{assu:allassu}\ 
\begin{enumerate}[(i)]
\item $\sigma:[0,\infty)\to [0,\infty)$, $\sigma(x)>0$ if $x>0$
and $\int_{a}^{b}\frac{\D x}{\sigma^{2}(x)}<\infty$ for any $0<a<b$;
\item $\displaystyle \sigma\in \Cc^3((0,\infty)\to (0,\infty))$.
\item $\int^{\infty}\frac{x}{\sigma^2(x)} \D x<\infty$.
\item $\int_{0}\frac{x}{\sigma^2(x)} \D x=\infty.$
\end{enumerate}
\end{assumption}

The first condition ensures existence of a unique weak solution~\cite{engelbert1985solutions}, which is a non-negative local martingale, hence a supermartingale.
From~\cite{delbaen2002no, mijatovic2012martingale}, (iii) holds if and only if~$X$ is a
strict local martingale, and we denote
$\df_{T} := x_{0}-\EE[X_T]>0$
the martingale defect;
(iv) is Feller's condition ensuring that the origin cannot be reached with positive probability in finite time.
Finally~(ii) could be relaxed, but simplifies some computations and does not restrict the scope much in regards of applications.
The Lamperti transform
\begin{equation}\label{eq:lamperti}
\Lambda(x):=\int_{x_{0}}^{x}\frac{\D y}{\sigma(y)},\qquad
\qquad\text{with}\qquad
\lf:=\lim_{x\downarrow 0}\Lambda(x)\quad\text{and}\quad \rf:=\lim_{x\uparrow\infty}\Lambda(x),
\end{equation}
is an increasing bijection in $\Cc^4((0,\infty)\to (\lf,\rf))$ and by It\^o's formula, $Z:=\Lambda(X)$
satisfies
\begin{equation}\label{eq:Z}
\D Z_t=\D W_t+b(Z_t) \D t,
\qquad \text{with }
Z_{0} = \Lambda(x_0)
\quad\text{and}\quad
b(z):=-\half  \sigma'\left(\Lambda^{\leftarrow}(z)\right).
\end{equation}

We consider further  assumptions needed to prove a strong rate.
\begin{assumption}\label{assu:osl}
There exists $K_{0}\ge0$ with $\sigma(x)\sigma''(x)\ge-2K_{0}$ for all $x>0$.
Furthermore, 
there exists $x_1\geq x_{0}$ such that
\begin{enumerate}[(i)]
\item on $[x_1,\infty)$:
$\sigma$ is non-decreasing and
there exist $\gamma>0$ and $0<c_{\sigma}\leq \cbar$ such that
$$
 c_{\sigma}  x^{1+\gamma}\leq \sigma(x)\leq \cbar x^{1+\gamma},\qquad
  |\sigma'(x)|\lesssim x^{\gamma},
  \qquad |\sigma''(x)|\lesssim x^{\gamma-1},\qquad
  |\sigma'''(x)|\lesssim x^{\gamma-2};
$$
\item 
on $(0,x_1]$:
\begin{enumerate}[(a)]
\item $\sigma(x)\leq C_{0} x$ for some $C_{0}\ge1$;
\item $|\sigma'|$, $|\sigma\sigma''|$ and $|\sigma^2\sigma'''|$ are bounded;
\item $\sup\{\frac{\sigma(x)}{\sigma(y)}: 0<x\leq x_1, 
\frac{x}{2} \leq y \leq \frac{3x}{2}\}<\infty$.
\end{enumerate}
\end{enumerate}
\end{assumption}

\begin{remark}\ 
\begin{itemize}
\item The first part of the  assumption is equivalent to $b'\leq K_{0}$ on $(\lf,\rf)$.
If~$\sigma$ is convex, it holds with $K_{0}=0$.
\item 
Assumption~\ref{assu:osl}(i) implies Assumption~\ref{assu:allassu}(iii) and $\rf<\infty$.
\item 
Assumption~\ref{assu:osl}(ii)(a) implies Assumption~\ref{assu:allassu}(iv) 
as well as 
\begin{equation}\label{eq:gap0}
\Lambda(x)-\Lambda\left(\frac{x}{2}\right)
= \int_{x/2}^{x}\frac{\D y}{\sigma(y)}
\geq \frac{\log(2)}{C_{0}}\qquad\text{for } x\leq x_1;
\end{equation}
\item 
Assumption~\ref{assu:osl}(ii)(c) is used in Lemma~\ref{lem:measurable}(iv). 
\item 
Assumption~\ref{assu:osl}(ii)(a) implies that $\lf = -\infty$.
\end{itemize}    
\end{remark}

We record the following computations that will come in handy later:
\begin{align}
\Lambda'''\sigma^3 & =2(\sigma')^2-\sigma\sigma''
\qquad\text{and}\qquad
\Lambda''''\sigma^4 = 6\sigma\sigma'\sigma''-6(\sigma')^3-\sigma^2\sigma''',\label{eq:derivatives}\\
b'(z) & = -\half  (\sigma\sigma'')(x),
\quad
b''(z) = -\frac{\sigma\left(\sigma'\sigma''+\sigma\sigma'''\right)(x)}{2},
\quad
(bb')(z) = \frac{\sigma'\sigma\sigma''(x)}{4},
\end{align}
with $x=\Lambda^{\leftarrow}(z)$, 
and
$\{\sigma', 
\sigma\sigma'', 
\Lambda''''\sigma^4, 
b''\circ\Lambda, 
(bb')\circ\Lambda\}\in \Oo\left(1+x^{k\gamma}\right)$
on $(0,\infty)$
with $k\in\{1,2,3,3,3\}$ under Assumption~\ref{assu:osl}.

\subsection{Truncation setup}\label{sec:trunc}
 
For $R>x_{0}$, let $\tau_R:=\inf\{t: X_t = R\}$, 
$X^{(R)}_T:=X_T\ind_{\{\tau_R>T\}}$,
$$
  \Tt(R) := \EE[X_T] - \EE\left[X^{(R)}_T\right] = \EE\left[X_T\ind_{\{\tau_R\leq T\}}\right] \geq 0.
$$
 Under the measure~$\QQ$ defined via $\frac{\D\QQ}{\D\PP}|_{\Ff_t} = \frac{X_t}{x_{0}}$ 
on $\{t<\tau_\infty\}$,
then~\cite{kardaras2015strict}
$$
\D X_t=\frac{\sigma(X_t)^2}{X_t} \D t+\sigma(X_t) \D W^{\QQ}_t,
$$
and
$\QQ(\tau_\infty>T) = \frac{\EE[X_T]}{x_{0}}$ so that $\QQ(\tau_\infty\leq T) = \frac{\df_{T}}{x_{0}}>0$
and
$\Tt(R)=x_{0} \QQ\left(\tau_R\leq T<\tau_\infty\right)$.
The following assumption will be key in determining the strong rate of our scheme:
\begin{assumption}\label{assu:theta}
There exists $\theta\in(0,1]$ 
and a non-decreasing function 
$\ell:(0,\infty) \to [1,\infty)$, slowly varying at infinity,
such that
$\Tt(R)\lesssim R^{-\theta}\ell(R)$ for all $R\ge2x_{0}$.
\end{assumption}

For most cases, considering $\ell(\cdot) \equiv 1$ will be sufficient, the case $\ell \equiv 1\vee\log$ will be needed only for $\CEV(\half)$ in Section~\ref{sec:CEV}.

\begin{remark}\label{rem:thetamom}
Assumption~\ref{assu:theta} holds for some $\theta\in(0,1]$ and~$\ell(\cdot)$ if and only if $\EE[X_T^p]<\infty$ for some $p>1$:
\begin{enumerate}[(i)]
\item For $R>x_{0}$, $\{X_T>R\}\subset\{\tau_R\leq T\}$, thus $R\PP(X_T>R)\leq\Tt(R)$.
Assumption~\ref{assu:theta} implies $\PP(X_T>x)\lesssim x^{-(1+\theta)}\ell(x)$, so that $\EE[X_T^p]<\infty$ for all $p<1+\theta$.
\item If $\EE[X_{T}^{p}]$ is finite for some $p>1$, then
$\Tt(R) = \EE[X_T\ind_{\{\tau_R\leq T\}}]
\leq \|X_{T}\|_{L^p}(\frac{x_{0}}{R})^{1-\frac{1}{p}}$ since $\PP(\tau_{R}\leq T) \leq \frac{x_{0}}{R}$ so Assumption~\ref{assu:theta} holds with $\theta=1-\frac{1}{p}$ and $\ell\equiv 1$.
\end{enumerate}
\end{remark}

\subsection{Discretisation scheme} 
\begin{definition}\label{def:scheme}
Fix $R>x_{0}$, $h=\frac{T}{N}$, $t_n=nh$, 
$\Delta_n := W_{t_{n+1}}-W_{t_n}$, introduce the discrete-time process
$X_{n+1}=X_{n}+\sigma(X_{n})\Delta_n$ 
starting from $X_{0} = x_{0}$, with $\sigma$ extended by $\sigma(y):=0$ for $y\le0$,
and
\begin{equation}\label{eq:nY}
\nbar := \min\{n\ge1: X_{n}\notin(0,R)\},
\qquad X^h_n := X_{n}\ind_{\{n<\nbar\}}.
\end{equation}
\end{definition}

\begin{assumption}\label{assu:LR}
We take $R_h=h^{-\alpha}$ 
for some $\alpha\in(0,\frac{1}{2\gamma})$.
\end{assumption}

\begin{assumption}\label{assu:mom}
There exists $\delta\in(0,1]$ such that
$\EE\left[\sigma(X_T)^{1+\delta}\right]$ is finite.
\end{assumption}
From Assumption~\ref{assu:osl}(i), Assumption~\ref{assu:mom} 
then implies that
$\EE[X_T^{(1+\gamma)(1+\delta)}]$ is also finite, 
hence Assumption~\ref{assu:theta}
holds with $\theta = 1-\frac{1}{(1+\gamma)(1+\delta)}$ by Remark~\ref{rem:thetamom}(ii).
The following result, with proof  postponed to Section~\ref{sec:prelim}, will also be useful:
\begin{lemma}\label{lem:VT}
Let
$V_T :=
\int_0^T\left(1+X_s^{2\gamma}\right)\D s$.
Under Assumptions~\ref{assu:allassu} and~\ref{assu:osl}, there exists $\kappa_{0} > 0$ such that
$\EE[\exp\{\kappa_{0} V_T\}]$ is finite.
\end{lemma}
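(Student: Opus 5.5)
The plan is to dominate $V_T$ by the quadratic variation of the local martingale $\log X$, and then bound an exponential moment of that quadratic variation through a self-referential Cauchy--Schwarz estimate.

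\emph{Step 1: domination of $V_T$.} By Assumption~\ref{assu:osl}(i), on $[x_1,\infty)$ we have $x^{2\gamma}\le c_\sigma^{-2}\sigma(x)^2/x^2$. On $(0,x_1]$ the quantity $x^{2\gamma}$ is bounded by $x_1^{2\gamma}$. Hence
\[
V_T\le C T + C A_T,
\qquad A_t:=\int_0^t\frac{\sigma(X_s)^2}{X_s^2}\D s .
\]
It therefore suffices to show $\EE[e^{\kappa A_T}]<\infty$ for some small $\kappa>0$.

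\emph{Step 2: It\^o on $\log X$ and localisation.} Since $X>0$ and $X$ does not explode under $\PP$, It\^o's formula gives
\[
\log X_t=\log x_0+M_t-\tfrac12A_t,
\qquad M_t:=\int_0^t\frac{\sigma(X_s)}{X_s}\D W_s,
\qquad \langle M\rangle=A .
\]
Fix $\tau_n:=\inf\{t:X_t\notin(1/n,n)\}\wedge T$. On $[0,\tau_n]$ the integrand $\sigma^2/x^2$ is bounded, so $A_{\tau_n}$ is bounded. Consequently $F_n:=\EE[e^{\kappa A_{\tau_n}}]$ is finite, which legitimises the manipulations below. We have $\kappa A_{\tau_n}=2\kappa\log(x_0/X_{\tau_n})+2\kappa M_{\tau_n}$. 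Cauchy--Schwarz gives
\[
F_n\le \EE\bigl[(x_0/X_{\tau_n})^{4\kappa}\bigr]^{1/2}\,\EE\bigl[e^{4\kappa M_{\tau_n}}\bigr]^{1/2}.
\]
For the second factor, write $e^{4\kappa M}=e^{4\kappa M-16\kappa^2A}\,e^{16\kappa^2 A}$ and apply Cauchy--Schwarz again:
\[
\EE\bigl[e^{4\kappa M_{\tau_n}}\bigr]\le \EE\bigl[\Ee(8\kappa M)_{\tau_n}\bigr]^{1/2}\,\EE\bigl[e^{32\kappa^2A_{\tau_n}}\bigr]^{1/2}.
\]
The stochastic exponential is a supermartingale (in fact a martingale, since $A_{\tau_n}$ is bounded), so the first factor on the right is at most $1$. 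Choosing $\kappa\le 1/32$, so that $32\kappa^2\le\kappa$, and using Jensen, we obtain
\[
F_n\le \EE\bigl[X_{\tau_n}^{-4\kappa}\bigr]^{1/2}x_0^{2\kappa}\,F_n^{1/4}.
\]
Since $F_n<\infty$, this yields $F_n\le \bigl(x_0^{2\kappa}\,\EE[X_{\tau_n}^{-4\kappa}]^{1/2}\bigr)^{4/3}$.

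\emph{Step 3: negative moments, which is the main obstacle.} It remains to bound $\sup_n\EE[X_{\tau_n}^{-p}]$ for $p=4\kappa$. This is where Feller's condition, in its quantitative form Assumption~\ref{assu:osl}(ii)(a), enters. Applying It\^o to $g(x)=x^{-p}$ gives drift $\tfrac12p(p+1)\sigma(x)^2x^{-p-2}$. On $(0,x_1]$ this drift is at most $\tfrac12p(p+1)C_0^2\,g(x)$. On $[x_1,\infty)$ the term $\sigma^2x^{-p-2}\sim x^{2\gamma-p}$ is \emph{not} bounded, so one cannot simply use $g$. Instead I would use a modified Lyapunov function $\tilde g$ that equals $x^{-p}$ on $(0,x_1]$ and is smoothly continued to be constant for $x\ge 2x_1$, say. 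For such $\tilde g$ the generator satisfies $\tfrac12\sigma^2\tilde g''\le C(1+\tilde g)$ everywhere: near $0$ by (ii)(a), and on the compact transition region by continuity. The stopped process $\tilde g(X_{t\wedge\tau_n})$ then satisfies $\EE[\tilde g(X_{t\wedge\tau_n})]\le \tilde g(x_0)+C\int_0^t(1+\EE[\tilde g(X_{s\wedge\tau_n})])\D s$, and Gronwall gives a bound uniform in $n$. Since $x^{-p}\le \tilde g(x)+x_1^{-p}$, this bounds $\EE[X_{\tau_n}^{-p}]$ uniformly in $n$.

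Hence $\sup_n F_n<\infty$. As $\tau_n\uparrow T$ almost surely, by non-attainability of $0$ (Assumption~\ref{assu:allassu}(iv)) and non-explosion of $X$ under $\PP$, Fatou's lemma gives $\EE[e^{\kappa A_T}]<\infty$. Step 1 then yields the claim with $\kappa_0=\kappa/C$.
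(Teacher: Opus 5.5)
Your proof is correct, but it takes a different route from the paper's.

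\textbf{The paper's route.} The paper does not use $\log X$. It builds an auxiliary function $F$ with $F''=\chi\,(1+x^{2\gamma})/\sigma^2$ and $F'(x)=-\int_x^\infty F''(y)\,\D y$, where the cut-off $\chi$ vanishes on $(0,x_1/2]$. This $F$ is affine near the origin, grows only logarithmically at infinity, and satisfies $(F')^2\sigma^2\lesssim 1+x^{2\gamma}$. So the quadratic variation of the martingale part is dominated by $V$ itself, while the contribution of small values of $X$ to $V_T$ is bounded crudely by $T(1+x_1^{2\gamma})$. The boundary term $F(X_\tau)-F(x_0)$ is then controlled by fractional moments $\EE[(1+\overline{X})^r]$, $r<1$, of the running maximum. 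These come for free from Doob's maximal inequality for the non-negative supermartingale $X$. Beyond non-attainability of $0$, no information on small values of $X$ is needed.

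\textbf{Your route.} Your choice of $\log X$ is more canonical. $\langle \log X\rangle = A$ dominates $V_T$ directly through $\sigma\ge c_\sigma x^{1+\gamma}$. The self-referential Cauchy--Schwarz closing step, which is the same device as in the paper (localisation plus the exponential supermartingale), becomes exact because $\langle M\rangle=A$.

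\textbf{The trade-off.} The price is that $\log(x_0/X)$ blows up at the origin rather than at infinity. You therefore need uniform negative moments of $X_{\tau_n}$, which is why Step~3 requires the extra Lyapunov--Gronwall argument based on the quantitative bound $\sigma(x)\le C_0x$ of Assumption~\ref{assu:osl}(ii)(a). In exchange, you never need positive moments of $\sup_{t\le T}X_t$ or Doob's inequality, since $(x_0/X)^{4\kappa}\le 1$ once $X\ge x_0$. In short, the two proofs trade control of $X$ near $0$ against control of its running maximum near infinity.

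A minor point: the bound $\EE[e^{32\kappa^2A_{\tau_n}}]\le F_n$ follows from plain monotonicity, since $A\ge0$ and $32\kappa^2\le\kappa$, so Jensen is not needed.
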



\subsection{Convergence result}\label{sec:main}
We now state and prove the main convergence result:

\begin{theorem}\label{thm:main}
Under Assumptions~\ref{assu:allassu}-\ref{assu:osl}-\ref{assu:theta}-\ref{assu:LR}-\ref{assu:mom},
for~$h$ small enough,
\begin{equation}\label{eq:main2}
\EE\left[\left|
X^h_N-X_T\right|\right]
\lesssim h^{\theta\alpha}\ell(h^{-\alpha}) + \sqrt{h}.
\end{equation}
Consequently, the strong rate is $\min\{\theta\alpha,\half\}$ for every $\alpha\in(0,\frac{1}{2\gamma})$:
\begin{enumerate}[(i)]
\item if $\theta>\gamma$, the choice $\alpha=\frac{1}{2\theta}$ is admissible and
$\EE[|X^h_N-X_T|]\lesssim \sqrt{h}\ell(h^{-\frac{1}{2\theta}})$;
\item if $\theta\le\gamma$, for every $\eps>0$ the choice $\alpha=\frac{1}{2\gamma}-\eps$ gives
$\EE[|X^h_N-X_T|]\lesssim h^{\frac{\theta}{2\gamma}-\theta\eps}\ell(h^{-\alpha})$.
\end{enumerate}
\end{theorem}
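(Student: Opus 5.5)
We split the error through the truncated target $X^{(R)}_T=X_T\ind_{\{\tau_R>T\}}$ with $R=R_h=h^{-\alpha}$:
\[
\EE\bigl[|X^h_N-X_T|\bigr]\le \EE\bigl[|X_T-X^{(R)}_T|\bigr]+\EE\bigl[|X^h_N-X^{(R)}_T|\bigr].
\]
The first term is $\EE[X_T\ind_{\{\tau_R\le T\}}]=\Tt(R_h)\lesssim h^{\theta\alpha}\ell(h^{-\alpha})$ by Assumption~\ref{assu:theta}. Everything then reduces to showing $\EE[|X^h_N-X^{(R)}_T|]\lesssim h^{\theta\alpha}\ell(h^{-\alpha})+\sqrt h$. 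This is a comparison between a killed Euler scheme and the killed diffusion on the growing domain $(0,R_h)$.

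\textbf{Coupling on the good event.} Both processes are driven by the same~$W$, and we work in Lamperti coordinates. On $(0,R_h)$ we have $\sigma'\lesssim R_h^{\gamma}$ and $\sigma\sigma''\lesssim R_h^{\gamma}$, so the local Lipschitz constant of $\sigma$ is $\lesssim h^{-\alpha\gamma}$. Since $\alpha\gamma<\tfrac12$, the one-step increments $\sigma'(X_n)\sigma(X_n)\Delta_n$ relative to $\sigma(X_n)$ are of size $h^{1/2-\alpha\gamma}\to0$. Define the good event $G$ on which both processes stay in $(0,R_h)$ up to $T$ and all Brownian increments satisfy $|\Delta_n|\le h^{1/2-\eta}$; the complement of the increment condition has super-polynomially small probability. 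On $G$ we compare $Z^h_n:=\Lambda(X_n)$ with $Z_{t_n}$. A Taylor expansion of $\Lambda$ up to fourth order, using~\eqref{eq:derivatives} and the polynomial bounds on $\sigma',\sigma\sigma'',\Lambda''''\sigma^4,b'',bb'$, shows that $Z^h$ is an Euler-type (essentially Milstein-like) scheme for~\eqref{eq:Z} with local error $\Oo(h^{3/2}(1+X_n^{3\gamma}))$ plus martingale terms. The one-sided condition $b'\le K_0$ gives stability. This yields $\EE[\sup_n|Z^h_n-Z_{t_n}|^2\ind_G]\lesssim h$, with constants involving $\EE[\exp\{cV_T\}]$ from Lemma~\ref{lem:VT}, which controls the accumulated $(1+X^{2\gamma})$ weights via a stochastic Gronwall argument. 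To return to the $X$ variable we use $|X^h_N-X_T|\le \sup_{\text{segment}}\sigma\cdot|Z^h_N-Z_T|$ and apply H\"older with Assumption~\ref{assu:mom} ($\sigma(X_T)\in L^{1+\delta}$). Near zero, Assumption~\ref{assu:osl}(ii)(c) together with~\eqref{eq:gap0} lets $\sigma$ at the scheme value be compared to $\sigma(X_T)$. Together these give a contribution $\lesssim\sqrt h$ up to H\"older losses. We will need to arrange that those losses are absorbed, possibly by using the exponential moment rather than plain H\"older.

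\textbf{Bad events.} Off $G$ there are two mismatch cases, $\{\nbar\le N<\tau_R/h\}$ and the reverse. In the second case $X^h_N$ is alive while $X^{(R)}_T=0$; in the first, $X^h_N=0$ while $X^{(R)}_T$ is alive. In both cases the error is bounded by $X_T\ind_{\{\tau_R>T\}}+X^h_N\ind_{\{\nbar>N\}}$ restricted to a near-boundary event. The key point is that the killed scheme is a discrete martingale stopped before exiting $(0,R)$, so $X^h_N\le R$. Moreover, a mismatch requires the coupled paths to be within distance $\Oo(\sqrt h\,\cdot)$ of level~$R$, or to overshoot between grid points. We bound the probability of the diffusion coming within $\epsilon$ of $R$ without hitting it, and of the discretely monitored path missing a crossing, by $\PP(\tau_{R-\epsilon}\le T<\tau_R)$ plus the standard $\sqrt h$ discrete-monitoring correction. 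Multiplying by $R$ and using $\Tt(R-\epsilon)$ gives $\lesssim h^{\theta\alpha}\ell(h^{-\alpha})+\sqrt h$.

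\textbf{Main obstacle.} The hardest step is this boundary-mismatch term. A naive bound $R\,\PP(\text{mismatch})$ loses a factor $R_h=h^{-\alpha}$. To avoid this, we would instead write the mismatch contribution as an expectation of $X$ at the exit-related time and use the $\QQ$-representation $\Tt(R)=x_0\QQ(\tau_R\le T<\tau_\infty)$, so that a factor~$R$ is absorbed into the density $X/x_0$. Under~$\QQ$ we would then estimate the probability of nearly hitting~$R$ in Lamperti scale, where the gap $\rf-\Lambda(R)\asymp R^{-\gamma}$ is compared with the $\sqrt h$ fluctuations; this comparison is again where $\alpha<1/(2\gamma)$ enters. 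Finally, optimising~$\alpha$ in~\eqref{eq:main2} gives items (i) and (ii) directly.
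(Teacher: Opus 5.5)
\paragraph{A genuine gap: the boundary-mismatch term.} The gap is exactly the step you flag as the main obstacle: the boundary-mismatch term is never controlled, and the remedies you propose do not close it.
\begin{itemize}
\item An absolute $\sqrt h$ discrete-monitoring correction multiplied by $R$ gives $h^{\half-\alpha}$. This is not $\lesssim\sqrt h$, and it is of order one when $\alpha=\half$.
\item $R\,\PP(\tau_{R-\epsilon}\le T<\tau_R)$ is not $\Tt(R-\epsilon)$, since the latter weights by $X_T$, not by $R$.
\item The claim that a mismatch forces the paths to lie within $\Oo(\sqrt h)$ of $R$ presupposes a pathwise coupling error of order $\sqrt h$. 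You only have that in $L^2$, not on an event of overwhelming probability.
\end{itemize}

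\paragraph{The missing idea: never match exit times.}
\begin{itemize}
\item Compare with $X_T$ rather than with $X^{(R)}_T$.
\item Kill the scheme at $R$, but stop the diffusion only at $\tau^{*}=\tau_{2R}$.
\item Take as good event $\{\max_{n}|\Df^z_{n\wedge\nunderbar}|\le\lambda\}$, together with small Brownian increments, with tolerance $\lambda\asymp\Lambda(2R)-\Lambda(R)\asymp R^{-\gamma}$. This tolerance is much larger than $\sqrt h$ precisely because $\alpha<\frac{1}{2\gamma}$.
\end{itemize}
On this event, $\tau_{2R}\le T$ forces $\nbar\le N$, and $\nbar\le N$ forces $X$ to reach $\Lambda^{\leftarrow}(\lambda_R-\lambda)>\frac{R}{2}$. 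In both cases $X^h_N=0$ and the error is $X_T\ind_{\{\tau_{R/2}\le T\}}$, whose expectation is at most $\Tt(\frac{R}{2})\lesssim R^{-\theta}\ell(R)$. So no near-boundary, monitoring or $\QQ$-hitting estimate is needed.

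Otherwise both processes are alive below $2R$, and the Lamperti comparison applies. This includes paths where $X$ has crossed $R$, which your $G$ excludes and your decomposition charges at cost $R$.

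The complement is bounded crudely by $|X^h_N-X_T|\le R+X_T$. This is harmless, because its probability is at most $\lambda^{-q}\EE[\max_n|\Df^z_{n\wedge\nunderbar}|^q]+\rho_h\lesssim h^{q(1-2\gamma\alpha)/2}+\rho_h$ for every $q$.

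\paragraph{A second gap: you need all moments.} You need $\EE[\max_n|\Df^z_{n\wedge\nunderbar}|^q]\lesssim h^{q/2}$ for arbitrarily large $q$, not only for $q=2$.
\begin{itemize}
\item These bounds drive the Markov step above.
\item The H\"older step with $\sigma(X_T)\in L^{1+\delta}$ needs the conjugate exponent $q=\frac{1+\delta}{\delta}$.
\item Interpolating your $L^2$ bound with the a priori bound $|\Df^z_{n\wedge\nunderbar}|\lesssim\lambda$ only yields $\lambda^{1-2/q}h^{1/q}\gg\sqrt h$. An exponential moment of $V_T$ does not help here.
\end{itemize}
The paper obtains all moments from three ingredients: the conditional bounds $\EE[|\xi_n|^m\ind_{\Aa_n}\mid\Ff_{t_n}]\le m!(Ch(1+X_n^{\gamma}))^m$; the one-sided bound $b'\le K_0$, which gives a deterministic Gronwall factor; and a Burkholder--Davis--Gundy/Young bootstrap using $\EE[V_T^m]<\infty$ from Lemma~\ref{lem:VT}.

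Finally, passing back from $Z$ to $X$ via $\sigma(X_T)$ needs $X_N\le2(X_T\vee x_1)$ for large values as well. That follows only from $|\Df^z_N|\le\lambda$ together with \eqref{eq:req2}; the small-$x$ gap bound you cite does not cover it.
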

\begin{remark}\label{rem:rem_thm}
In~\eqref{eq:main2}, 
the first term comes from the truncation error while the second originates from the discretisation error.
A third error, corresponding to the probability that the scheme diverges from the true path
before being killed, is bounded by an arbitrarily large power of~$h$
and does not appear.
\end{remark}
The following corollary is straightforward, 
extending the theorem
to an~$L^p$ statement.

\begin{corollary}\label{cor:Lp}
Under the assumptions of Theorem~\ref{thm:main},
\begin{enumerate}[(i)]
\item $\left\|X^h_N-X_T\right\|_{L^p}
\lesssim h^{\theta\alpha}\ell(h^{-\alpha})+\sqrt{h}$ for $p\le1$;
\item for $1<p<\min\{1+\delta, p^{*}\}$ with $p^{*}:=\sup\{p:\EE[X_T^p]<\infty\}$,
$$
\left\|X^h_N-X_T\right\|_{L^p}
\lesssim \EE\left[X_T^p\ind_{\left\{\tau_{\half R_h}\le T\right\}}\right]^{\frac{1}{p}}+\sqrt{h}.
$$
\end{enumerate}
\end{corollary}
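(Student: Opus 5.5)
The corollary will be derived from Theorem~\ref{thm:main} and from the error decomposition underlying its proof; no new discretisation estimates should be needed.

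\emph{Part (i).} For $p\le1$ we apply Jensen's inequality (concavity of $y\mapsto y^p$ on $[0,\infty)$):
\[
\|X^h_N-X_T\|_{L^p}=\EE\bigl[|X^h_N-X_T|^p\bigr]^{1/p}\le \EE\bigl[|X^h_N-X_T|\bigr],
\]
and then conclude by~\eqref{eq:main2}. For $p<1$ the quantity $\|\cdot\|_{L^p}$ is only a quasi-norm, but Jensen does not need the triangle inequality, so this step goes through as stated.

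\emph{Part (ii).} Here we reuse the three-way split from the proof of Theorem~\ref{thm:main}. Let $G_h$ be the good event on which the Euler path stays close to the true path up to the killing time. On the complement $G_h^c$ we control the error by H\"older's inequality. The bound $\PP(G_h^c)\lesssim h^{m}$ holds for arbitrary $m$ (Remark~\ref{rem:rem_thm}). Moreover $X^h_N$ is bounded by $R_h=h^{-\alpha}$, and $X_T\in L^{q}$ for every $q<p^*$. Hence, for some $q\in(p,p^*)$,
\[
\EE\bigl[|X^h_N-X_T|^p\ind_{G_h^c}\bigr]\le \bigl(R_h^p+\|X_T\|_{L^q}^p\bigr)\PP(G_h^c)^{1-p/q},
\]
and this is $\Oo(h^{p/2})$ once $m$ is chosen large enough.

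On $G_h$ we distinguish two cases.
\begin{itemize}
\item If the true path reaches $\half R_h$ before $T$, the scheme is either killed, so that the error equals $X_T$, or stays close to $X_T$. In both cases the contribution is bounded by $\EE[X_T^p\ind_{\{\tau_{\half R_h}\le T\}}]$, plus a discretisation remainder.
\item Otherwise the true path stays below $\half R_h$. Then the scheme is not killed at $R_h$, and the remaining error is pure discretisation error between the scheme and $X_T$ on $\{\tau_{\half R_h}>T\}$. This is the place where the $L^1$ argument must be upgraded to $L^p$.
\end{itemize}

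\emph{Main obstacle.} The hard step is this upgrade of the discretisation bound. In the $L^1$ proof, presumably, $Z=\Lambda(X)$ is compared with the Lamperti image of the scheme, and the resulting $\sqrt h$ error in the $Z$ variable is transferred back through $\Lambda^{\leftarrow}$. That transfer produces a multiplier $\sigma(X)$ (mean value theorem), and it is controlled by a moment bound on $\sigma(X_T)$. For the $L^p$ version I would apply H\"older's inequality with exponents $(1+\delta)/p$ and its conjugate: $\sigma(X_T)^p$ is integrable up to the power $(1+\delta)/p$ by Assumption~\ref{assu:mom}, and the $Z$-error should have all moments of order $\sqrt h$. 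The condition $p<1+\delta$ is exactly what makes this work. The $Z$-error moments should follow from the one-sided Lipschitz bound $b'\le K_0$ together with the exponential integrability of $V_T$ from Lemma~\ref{lem:VT}, which absorbs the polynomial growth of $b''$ and $bb'$. These bounds are uniform in $p$, so the Gronwall-type estimate used for $L^1$ should carry over.

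Finally, we take $p$-th roots and use $(a+b+c)^{1/p}\lesssim a^{1/p}+b^{1/p}+c^{1/p}$ to obtain the stated bound.
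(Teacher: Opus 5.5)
Your proposal is correct and is essentially the paper's proof: Jensen's inequality for $p\le1$, and for $1<p<\min\{1+\delta,p^{*}\}$ it uses the same three pieces. First, the bad event is handled by H\"older with some exponent in $(p,p^{*})$ together with the arbitrarily-high-power bound on its probability. Second, on the killed part $|X^h_N-X_T|=X_T$, and that event lies in $\{\tau_{\half R_h}\le T\}$ by Lemma~\ref{lem:sync} and $\Lambda^{\leftarrow}(\Lambda(R_h)-\lambda)>\half R_h$. Third, the unkilled part uses $|X^h_N-X_T|\le C(1+\sigma(X_T))|\Df^z_N|$ and H\"older with exponents $\frac{1+\delta}{p}$ and $\frac{1+\delta}{1+\delta-p}$; the only cosmetic difference is that you split on $\{\tau_{\half R_h}\le T\}$ rather than on killed versus not killed. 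Your ``main obstacle'' is not actually one: Proposition~\ref{prop:L2} already gives $\EE[\max_{n\le N}|\Df^z_{n\wedge\nunderbar}|^q]\lesssim h^{q/2}+\rho_h$ for every $q\ge2$, in particular for $q=\frac{p(1+\delta)}{1+\delta-p}\ge2$, so no new Gronwall-type argument is needed.
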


It is natural to ask how the strong error~$\mathrm{D}^{s}$ in the theorem extends to the weak approximation $\mathrm{D}^{w}_f:=\left|\EE[f(X_N^h)] - \EE[f(X_T)]\right|$.
If the function~$f$ is Lipschitz
then trivially $\mathrm{D}^{w}_f \leq \textrm{Lip(f)}\mathrm{D}^{s}$, 
which applies to European Call payoffs $f(x) \equiv \max(x-K,0)$ in Mathematical Finance, which was our primary motivation.
For bounded continuous functions, Assumption~\ref{assu:theta} is clearly not needed, and a better rate would (unsurprisingly) follow.
We leave the general case for further analysis, where each term in~\eqref{eq:main2} would reach differently depending on the behaviour of~$f$, as hinted at in Remark~\ref{rem:rem_thm}.

\section{Application: the CEV model}\label{sec:CEV}
We now check how the truncation scheme and the results above apply to a popular example (especially in the mathematical finance literature~\cite{carr2006jump,davydov2001pricing,hagan2015probability}): the constant-elasticity-of-variance (CEV) model~\cite{cox1996constant}.

\subsection{Theoretical checks}

The CEV model corresponds to~\eqref{eq:sde} with
$\sigma(x)=x^{1+\gamma}$,
and we write $X\sim\CEV(\gamma)$.
Since Assumption~\ref{assu:allassu}(iii) clearly holds if and only if $\gamma>0$, 
we shall only be interested in this case for the CEV model.
The following proposition summarises the conditions on~$\gamma$ ensuring  that $\CEV(\gamma)$ satisfies the different assumptions.
When $\gamma>0$, 
then from~\cite{going2003survey,jeanblanc2009mathematical}, $\zeta:=\frac{X^{-\gamma}}{\gamma}$ is a Bessel $\BES(2+\frac{1}{\gamma})$ process and satisfies
$\D \zeta_t =\frac{1+\gamma}{2\gamma\zeta_t} \D t
-\D W_t$.
Consequently,
$\{\zeta = \frac{1}{\gamma} R^{-\gamma}\} = \{X=R\}$ and
\begin{enumerate}[(a)]
\item $\EE[X_T^p]<\infty$ if and only if $p<p^{*}:=2\gamma+1$, 
in which case $\sup_{t\leq T}\EE[X_t^p]<\infty$, 
and $\PP(X_T>x)\sim x^{-(2\gamma+1)}$, so that
$\EE[X_T^{2(1+\gamma)}\ind_{\{X_T\leq \mf\}}]\sim\mf$;
\item Under $\QQ$, $\zeta\sim\BES(2-\frac{1}{\gamma} < 2)$ and hits~$0$ at $\tau_\infty$;
\end{enumerate}

\begin{proposition}\label{prop:CEV}
Consider $\CEV(\gamma)$ with $\gamma>0$. 
Then
\begin{enumerate}[(i)]
\item The first part of Assumption~\ref{assu:osl} with $K_{0}=0$.
\item
Assumption~\ref{assu:osl}(ii) holds.
\item Assumption~\ref{assu:theta} holds with
$\theta=\min\{1, 2\gamma\}$  and $\ell\equiv \ind_{\{\gamma\ne\half\}} 
+ (1\vee\log)\ind_{\{\gamma=\half\}}$.
\item 
Assumption~\ref{assu:mom} holds for every $\delta<\frac{\gamma}{1+\gamma}$, and Theorem~\ref{thm:main} gives
\begin{equation}\label{eq:CEVrates}
\EE\left[\left|X^h_N - X_T\right|\right]
\lesssim
\left\{
\begin{array}{llll}
\sqrt{h}, 
 & \text{if } \gamma\in(0,\half), & \text{with }R_h=h^{-\frac{1}{4\gamma}},\\
 \sqrt{h}|\log(h)|, &  
 \text{if }\gamma=\half, & \text{with }R_h=h^{-\half},\\
 \sqrt{h}, &  \text{if }\gamma\in(\half,1), & \text{with }R_h=h^{-\half},\\
 h^{a} \text{ for all }
 a \in (0,\frac{1}{2\gamma}), &  \text{if }\gamma\ge1, & \text{with }R_h=h^{-a}.
\end{array}
\right.
\end{equation}
\end{enumerate}
\end{proposition}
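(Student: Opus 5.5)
For (i), (ii) and (iv) the plan is to compute directly with $\sigma(x)=x^{1+\gamma}$; for (iii) the plan is to compute the truncation error $\Tt(R)$ through the Bessel representation of~$\zeta$. Item (iii) carries the real content.

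\emph{Items (i), (ii), (iv).} We have $\sigma\sigma''(x)=\gamma(1+\gamma)x^{2\gamma}\ge0$, so $K_0=0$ ($\sigma$ is convex). On $[x_1,\infty)$ the growth and derivative bounds hold trivially with $c_\sigma=\cbar=1$, and $\sigma$ is increasing. On $(0,x_1]$:
\begin{itemize}
\item $\sigma(x)\le x_1^{\gamma}x$, so one can take $C_0=\max\{1,x_1^\gamma\}$;
\item $|\sigma'|\propto x^\gamma$, $|\sigma\sigma''|\propto x^{2\gamma}$ and $|\sigma^2\sigma'''|\propto x^{3\gamma}$ are bounded;
\item $\sigma(x)/\sigma(y)\le 2^{1+\gamma}$ whenever $y\ge x/2$.
\end{itemize}
For (iv), $\EE[\sigma(X_T)^{1+\delta}]=\EE[X_T^{(1+\gamma)(1+\delta)}]$ is finite iff $(1+\gamma)(1+\delta)<2\gamma+1$ by fact (a), that is iff $\delta<\frac{\gamma}{1+\gamma}\,(<1)$.

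The rates then follow by plugging into Theorem~\ref{thm:main}.
\begin{itemize}
\item For $\gamma<\half$: $\theta=2\gamma>\gamma$, so case (i) applies with $\alpha=\frac{1}{4\gamma}<\frac{1}{2\gamma}$.
\item For $\gamma\in[\half,1)$: $\theta=1>\gamma$, so $\alpha=\half<\frac1{2\gamma}$. When $\gamma=\half$ the factor $\ell(h^{-1/2})\lesssim|\log h|$ appears.
\item For $\gamma\ge1$: $\theta=1\le\gamma$, so case (ii) gives rate $\alpha=a$ for any $a<\frac1{2\gamma}$.
\end{itemize}

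\emph{Item (iii), decomposition.} Apply the strong Markov property at $\tau_R$ under~$\PP$:
$$\Tt(R)=\EE\bigl[\ind_{\{\tau_R\le T\}}\,m(R,T-\tau_R)\bigr],\qquad m(R,s):=\EE_R[X_s]=R\,\QQ_R(\tau_\infty>s).$$
Under~$\QQ_R$, $\zeta$ is a $\BES(2-\frac1\gamma)$ started at $r=R^{-\gamma}/\gamma$, and $\tau_\infty$ is its hitting time of~$0$. That hitting time has the law $r^2/(2G)$ with $G\sim\mathrm{Gamma}(\frac1{2\gamma})$. Hence $\QQ_r(\tau_0>s)=\PP(G<r^2/(2s))\lesssim (r^2/s)^{1/(2\gamma)}$, which gives
$$m(R,s)\lesssim\min\{R,\,s^{-1/(2\gamma)}\}.$$

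\emph{Item (iii), the law of $\tau_R$.} Since $X_{\tau_R}=R$, we get $\PP(\tau_R\in\D t)=\frac{x_0}{R}\QQ(\tau_R\in\D t)$. Under~$\QQ$, $\tau_R$ is the hitting time of the small level~$r$ by the $\BES(2-\frac1\gamma)$ process started at $\zeta_0$. The key claim is that its density is bounded on $[T/2,T]$ uniformly in small~$r$, so that $\PP(\tau_R\in\D t)\lesssim R^{-1}\D t$ there. Meanwhile $\PP(\tau_R\le T)\le x_0/R$ handles $t<T/2$, where $m\lesssim (T/2)^{-1/(2\gamma)}$.

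\emph{Item (iii), conclusion.} Combining the two bounds,
$$\Tt(R)\lesssim R^{-1}+R^{-1}\int_0^{T/2}\min\{R,s^{-1/(2\gamma)}\}\D s .$$
The integral is bounded for $\gamma>\half$, is $\lesssim \log R$ for $\gamma=\half$, and is $\lesssim R^{1-2\gamma}$ for $\gamma<\half$ (splitting at $s=R^{-2\gamma}$). This gives $\theta=\min\{1,2\gamma\}$ with the stated~$\ell$.

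\emph{Main obstacle.} I expect the uniform density bound for $\tau_r$ to be the hard step. I would first try the Laplace transform of Bessel first-passage times, expressed through ratios of modified Bessel functions. As a fallback I would use monotonicity and scaling: $\tau_r\uparrow\tau_0$ as $r\downarrow0$, and
$$\QQ(\tau_r\in[t,t+\eps])\le\QQ(\tau_0\in[t,t+\eps+\eta])+\QQ(\tau_0-\tau_r>\eta),$$
where the last term equals $\PP(r^2/(2G)>\eta)$ by the strong Markov property at $\tau_r$. Together with the smooth explicit density of $\tau_0=\zeta_0^2/(2G)$, this should give a bound on $\QQ(\tau_r\in[t,t+\eps])$ that is linear in~$\eps$ up to a negligible error. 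That is sufficient for the integral above.
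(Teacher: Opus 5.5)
Outside the core estimate of (iii), your proposal is correct. Items (i), (ii) and (iv) are fine. So are the following:
\begin{itemize}
\item the strong Markov decomposition $\Tt(R)=\EE[\ind_{\{\tau_R\le T\}}m(R,T-\tau_R)]$;
\item the bound $m(R,s)\lesssim\min\{R,s^{-1/(2\gamma)}\}$;
\item the identity $\PP(\tau_R\in\D t)=\frac{x_0}{R}\QQ(\tau_R\in\D t)$;
\item the final integral.
\end{itemize}
Unwound, your representation coincides with the paper's $\Tt(R)=x_0\QQ(T-\Delta<\tau_r\le T)$, where $\Delta:=\tau_0-\tau_r$ is independent of $\tau_r$ under $\QQ$.

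The gap is the step you flag yourself: no bound on the law of $\tau_r$, uniform in small $r$, is actually established. The Laplace-transform route is not carried out, and the fallback as written falls short. Your union bound gives
\[
\QQ(\tau_r\in[t,t+\eps])\le c(\eps+\eta)+\PP\Bigl(G<\frac{r^2}{2\eta}\Bigr).
\]
Since $\PP(G<x)\asymp x^{1/(2\gamma)}$ as $x\downarrow0$, optimising over $\eta$ leaves an additive error of order $r^{2/(1+2\gamma)}\asymp R^{-2\gamma/(1+2\gamma)}$. A uniform additive error $E$ per interval can only be controlled in your integral by $\frac{x_0}{R}E\sup_s m(R,s)=x_0E$. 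Nothing prevents the mass $E$ from sitting near $t=T$, where $m(R,T-t)\approx R$. So this route yields at best $\theta=\frac{2\gamma}{1+2\gamma}<\min\{1,2\gamma\}$.

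The missing idea is to use the independence of $\Delta$ and $\tau_r$ multiplicatively rather than through a union bound. For any interval $I$,
\[
\QQ(\tau_r\in I)\,\QQ(\Delta\le Kr^2)=\QQ(\tau_r\in I,\ \Delta\le Kr^2)\le\QQ\bigl(\tau_0\in I+[0,Kr^2]\bigr)\le c\,(|I|+Kr^2).
\]
By scaling, $\QQ(\Delta\le Kr^2)=\PP(G\ge\frac{1}{2K})\ge\half$ for some $K$ independent of $r$. Hence $\QQ(\tau_r\in I)\le2c(|I|+Kr^2)$. This is not a pointwise density bound, but an interval bound whose additive error $\Oo(r^2)=\Oo(R^{-2\gamma})$ is admissible, since $R^{-2\gamma}\lesssim R^{-\min\{1,2\gamma\}}$.

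You can insert this bound into your integral by a layer-cake argument, because $t\mapsto m(R,T-t)$ is non-decreasing and bounded by $R$. It costs only an extra $\Oo(x_0r^2)$, after which your computation goes through unchanged. This is the paper's argument, which applies the bound directly to $\Tt(R)/x_0=\EE^{\QQ}[\mu_r(\Delta)]$ with $\mu_r(\delta):=\QQ(T-\delta<\tau_r\le T)$.
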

\begin{remark}
For $\gamma<1$ the scheme achieves the usual Euler strong rate~$\half$ even though 
the model is a strict local martingale with
infinite $p$-moments for $p\ge2\gamma+1$.
For $\gamma\ge1$,
the exact rate $\frac{1}{2\gamma}$ 
remains open here.
\end{remark}

\begin{proof}
(i) is obvious.
For~(ii), take $x_1=1\vee x_{0}$, $C_{0}=x_{1}^{\gamma}$;
$\sigma'(x) = (1+\gamma) x^{\gamma}$, 
$\sigma(x)\sigma''(x) = \gamma(1+\gamma) x^{2\gamma}$, 
$\sigma^2(x)\sigma'''(x) = \gamma(\gamma+1)(\gamma-1)x^{3\gamma}$ 
all vanish at~$0$, 
and $\frac{\sigma(x)}{\sigma(y)} \leq 2^{1+\gamma}$ 
for $y\geq \frac{x}{2}$. 
Under Assumption~\ref{assu:osl},
$\eps_{R} \sim R^{-\gamma}$,
$B_R \lesssim R^{\gamma}$,
and 
$L_R \lesssim R^{2\gamma}$
as $R\uparrow\infty$.

Regarding~(iii),
under~$\QQ$, $\zeta\sim\BES(2-\frac{1}{\gamma})$ with $\zeta_{0}>0$ 
so~\cite{going2003survey}
$\frac{\zeta_{0}^2}{2\tau_{0}^{\zeta}}\sim\Gamma(\nu)$ with $\nu = \frac{1}{2\gamma}$.
By the strong Markov property, $\Delta:=\tau_{0}^{\zeta} - \tau_{\gamma_{R}}^{\zeta}$ 
(with $\gamma_{R}:=\frac{1}{\gamma} R^{-\gamma}$)
is independent of $\Ff_{\tau_{\gamma_{R}}^{\zeta}}$, 
hence of $\tau_{\gamma_{R}}^{\zeta}$, 
and is distributed as the absorption time of~$\zeta$ started from~$\gamma_{R}$.
Therefore
$\frac{\gamma_{R}^2}{2\tau_{0}^{\zeta}}\sim\Gamma(\nu)$ 
under~$\QQ_{\gamma_{R}}$
as well and
$\QQ(\Delta\leq K \gamma_{R}^2)
= \PP(\Gamma(\nu)\geq \frac{1}{2K})$.
For any $\delta>0$,
$$
\left\{T-\delta < \tau_{\gamma_{R}}^{\zeta}\leq T\right\}
\cap \left\{\Delta\leq K \gamma_{R}^2\right\}
\subset
\{T-\delta < \tau_{0}^{\zeta} \leq T+K\gamma_{R}^{2}\}.
$$
Let 
$\mu_{\gamma_{R}}(\delta)
:= \QQ\left(T-\delta < \tau_{\gamma_{R}}^{\zeta}\leq T\right)$.
Since~$\Delta$ and~$\tau_{\gamma_{R}}^{\zeta}$
are independent, 
and choosing~$K$ sufficiently large
to ensure $\PP(\Gamma(\nu)\geq \frac{1}{2K})\geq \half$,
the left-hand side has probability at least $\half\mu_{\gamma_{R}}(\delta)$
and the right-hand side at most 
$c(\delta+K\gamma_{R}^{2})$, 
with~$c$ the supremum of the density of~$\tau_{0}^{\zeta}$ under~$\QQ$, which is finite since $\tau_{0}^{\zeta} = \frac{\zeta_{0}^2}{2G}$ with $G\sim\Gamma(\nu)$,
that is
$\mu_{\gamma_{R}}(\delta)
\leq 2c(\delta + K \gamma_{R}^2)$,
for all $\delta>0$ and all $\gamma_{R}\in (0,\zeta_{0})$.
Now, since $\Delta\sim\frac{\gamma_{R}^2}{2G}$,
with $b:=c\gamma_{R}^2$ and $G\sim\Gamma(\nu)$,
\begin{align*}
\frac{\Tt(R)}{x_{0}}
 = \EE^{\QQ}\left[\mu_{\gamma_{R}}(\Delta)\right]
 & \leq 2b K 
+ \EE^{\QQ}\left[\min\left\{1,\frac{b }{G}\right\}\right]\\
 & \leq 2cK \gamma_{R}^2
+ \PP\left(G<b \right)
+ b \EE\left[\frac{\ind_{\{G\geq b \}}}{G}\right]
 \leq 2b K
+ \begin{cases}
b ^\nu, & \nu<1,\\
-b \log(b ), & \nu=1,\\
b , & \nu>1,
\end{cases}
\end{align*}
since the density of~$G$ reads $\frac{x^{\nu-1}\E^{-x}}{\Gamma(\nu)}$ and $b\in (0,1)$ for~$R$ large enough,
so $\PP(G<b)\leq \frac{b^\nu}{\Gamma(\nu+1)}$;
the expectation term is
$\frac{1}{\Gamma(\nu)}\int_{b}^{\infty}x^{\nu-2}\E^{-x}\D x$ and the three cases correspond to whether~$x^{\nu-2}$ is integrable near the origin.
With $\gamma_{R}\sim R^{-\gamma}$
then $\gamma_{R}^{2\nu}\sim R^{-1}$ and $\gamma_{R}^2\sim R^{-2\gamma}$, 
that is, in light of Assumption~\ref{assu:theta},
$\theta=1$ and $\ell\equiv 1$ if $\gamma>\half$ 
and 
$\theta=1$ and $\ell\equiv 1\vee \log$ if $\gamma=\half$.
Regarding~(iv), $p^{*}=2\gamma+1>1+\gamma$ gives $\EE[\sigma(X_T)^{1+\delta}] = \EE[X_T^{(1+\gamma)(1+\delta)}]<\infty$ if and only if $\delta<\frac{\gamma}{1+\gamma}$.
Then $\theta>\gamma$ when $\gamma<1$, 
in which case 
$\alpha = \frac{1}{2\theta}$ equals $\frac{1}{4\gamma}$ when $\gamma<\half$ and~$\half$ when $\gamma\in[\half,1)$, both smaller than $\frac{1}{2\gamma}$;
for $\gamma \ge 1$, $\theta = 1 \le \gamma$, and~\eqref{eq:CEVrates} follows from Theorem~\ref{thm:main}.
\end{proof}

\begin{remark}\ 
\begin{itemize}
\item As a consistency check, 
taking $\theta=\min\{1, 2\gamma\}$ in
Remark~\ref{rem:thetamom}
gives 
$\EE[X_{T}^{p}]$ finite
for all $p<1+\theta = 1+\min\{1, 2\gamma\} = 2\gamma+1$ for $\gamma<\half$, as in~(a) above.
Since $\theta\le p^{*}-1 = 2\gamma$, the value $\theta=2\gamma$ is optimal in Assumption~\ref{assu:theta} for $\gamma<\half$.
\item From Section~\ref{sec:proofs},
take $\eps_{R} = \frac{1-2^{-\gamma}}{\gamma} R^{-\gamma}$, 
$\Lambda(y) - \Lambda(\frac{y}{2})=y^{-\gamma}\frac{2^{\gamma}-1}{\gamma} \geq \eps_{R}
>\frac{5\lambda}{2}$ for $y\le2R$, and~\eqref{eq:req2} holds.
\item 
Assumption~\ref{assu:mom} holds for every $\delta<\frac{\gamma}{1+\gamma}$ by~(a) before Proposition~\ref{prop:CEV}.
For Lemma~\ref{lem:VT}, 
$V_T = T + \frac{1}{\gamma^{2}}\int_0^T\frac{\D s}{\zeta_s^{2}}$; since $\zeta\sim\BES(2+\frac{1}{\gamma})$ with index~$\frac{1}{2\gamma}$, then
$\EE[\exp\{\kappa\int_0^T\frac{\D s}{\zeta_s^{2}}\}]$ is finite for $\kappa<\frac{1}{8\gamma^2}$ and infinite for $\kappa>\frac{1}{8\gamma^2}$ 
from~\cite[Part~I,\S6]{jeanblanc2009mathematical}.
\end{itemize}
\end{remark}

\subsection{Numerical validation}

We now test the rate in Theorem~\ref{thm:main} 
on $\CEV(\gamma)$
starting from $x_0=1$
and with time horizon $T=1$, for
$\gamma\in\{\frac{1}{4},\half,\frac{3}{4},1,\frac{3}{2}\}$, 
covering the three regimes of
Proposition~\ref{prop:CEV}(iv).
As a reference value, we consider the exact solution,
on the same Brownian path,
from the Bessel
representation in Section~\ref{sec:CEV}:
the transform 
$\zeta := \frac{X^{-\gamma}}{\gamma}$ solves
$\D \zeta_t = \frac{d-1}{2\zeta_t}\D t - \D W_t$ with $d = 2+\frac{1}{\gamma}>2$,
which we discretise on a fine grid of $N^{\star}=2^{14}$ steps with the drift-implicit
Euler scheme
$$
\zeta_{k+1}
 = \half\left\{
 \zeta_k - \Delta^W_{s_k} + \sqrt{(\zeta_k-\Delta^W_{s_k})^2+2(d-1)\delta}\right\},
\quad \text{with }
\Delta^W_{s_k} :=W_{s_{k+1}}-W_{s_k},
\delta = \frac{T}{N^{\star}},
$$
which is positivity preserving and strongly convergent of order one~\cite{alfonsi2013strong,neuenkirch2014first}. 
For the numerical scheme,
we take~$10^5$ paths,
$R_h = h^{-\alpha}$  and
$h=2^{-j}$, $j=2,\dots,10$, with $\alpha=\half$ for $\gamma\in\{\half,\frac34\}$ and, for $\gamma\in\{1,\frac{3}{2}\}$,
$\alpha = \frac{1}{2\gamma}-\eps$ with $\eps=0.05$ and $\eps=\frac{1}{30}$ respectively.
Figure~\ref{fig:cev_rates} shows the error against~$h$ in logarithmic scale, together with the
rate $\min\{\theta\alpha,\half\}$ predicted by~\eqref{eq:CEVrates}, and the numbers are reported in Table~\ref{tab:cev_rates}.
In all cases where $\gamma\neq\half$, 
the observed rate is at least the predicted one, and so is the fit for $\gamma=\half$.
 
\begin{figure}[htb]
\centering
\includegraphics[scale=0.35]{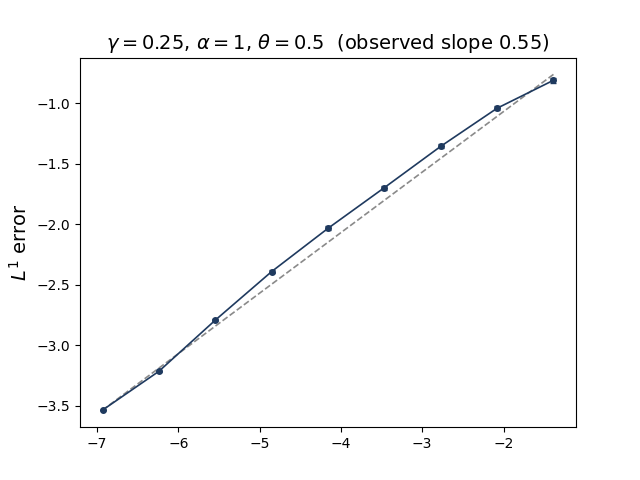}
\includegraphics[scale=0.35]{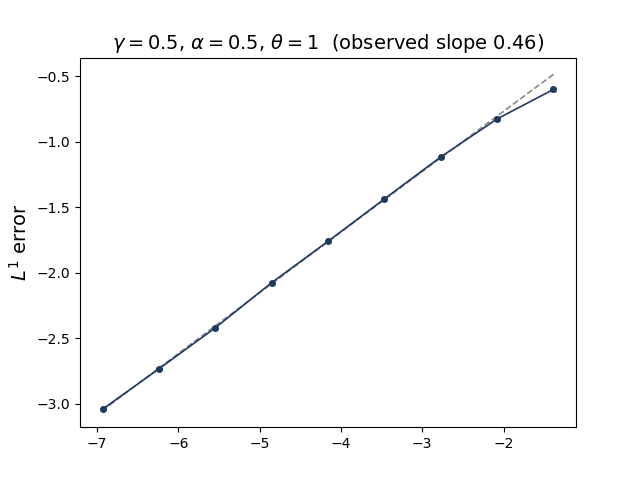}
\includegraphics[scale=0.35]{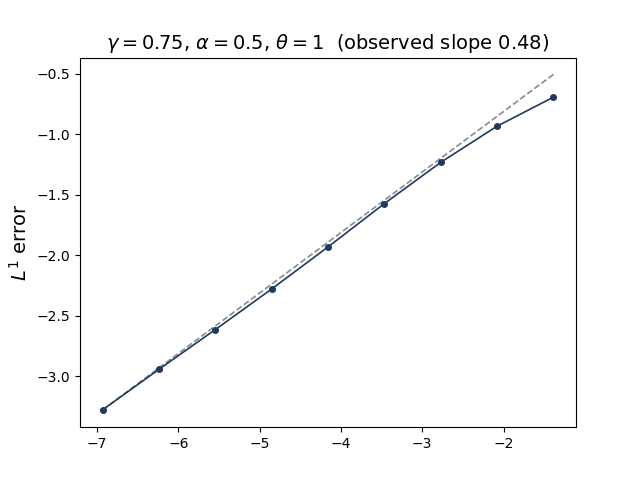}
\includegraphics[scale=0.35]{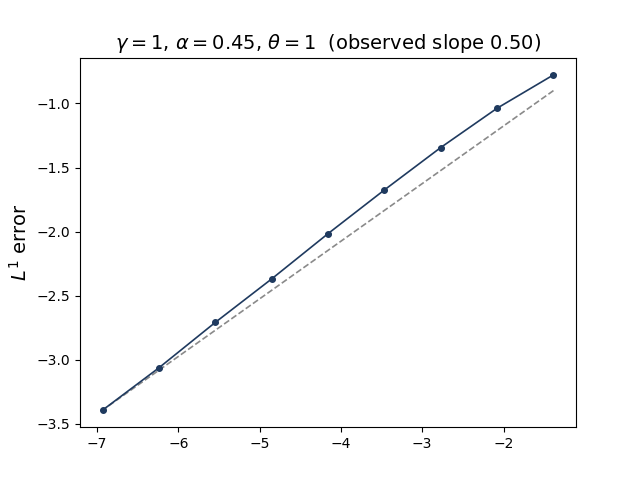}
\includegraphics[scale=0.35]{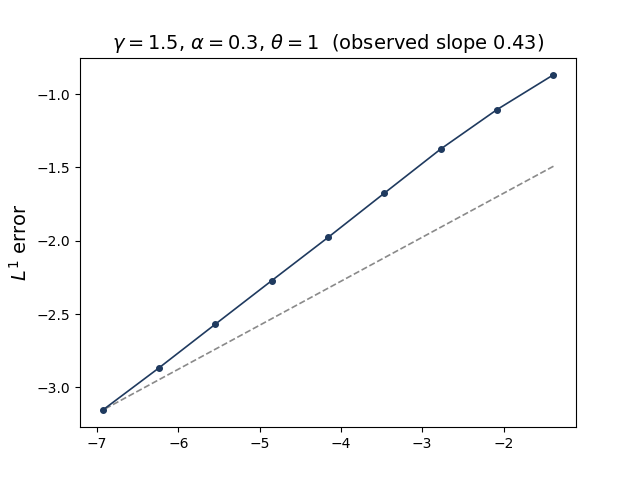}
\caption{Strong error $\mathbb{E}[|X^h_N-X_T|]$  against the
step~$h$ on $\log$ scale 
for different values of~$\gamma$.
For $\gamma=\half$, we fit $\sqrt{h}(a+b\log(1/h))$.
The dashed line has predicted slope~$\min\{\theta\alpha,\half\}$ (Theorem~\ref{thm:main} and
Proposition~\ref{prop:CEV}).}
\label{fig:cev_rates}
\end{figure}
 
\begin{table}[htb]
\centering\small
\begin{tabular}{c c c c c}
\hline
$\gamma$ & $\theta$ & $\alpha$ & predicted slope $\min\{\theta\alpha,\half\}$ & observed slope\\
\hline
$1/4$ & $1/2$ & $1$ & $0.5$ & $0.55$\\
$1/2$ & $1$ & $1/2$ & $\sqrt h\log(1/h)$ & $0.46$\\
$3/4$ & $1$ & $1/2$ & $0.50$ & $0.48$\\
$1$ & $1$ & $0.45$ & $0.45$ & $0.50$\\
$3/2$ & $1$ & $0.30$ & $0.30$ & $0.43$\\
\hline
\end{tabular}
\caption{Predicted and observed $L^1$ rates for $\CEV(\gamma)$; $R_h=2h^{-\alpha}$.}
\label{tab:cev_rates}
\end{table}

For comparison, the plain Euler scheme $X_{n+1} = X_n+\sigma(X_n)\Delta_n$ (with $\sigma(x):=0$ for $x\le0$) was run on the
same paths for $\gamma=1$ with $40,000$ samples. 
The sample mean of~$X_T$ is $0.678$, close to the exact value 
$\EE[X_T] = 2\Phi(1)-1 = 0.683$ since $X\sim 1/\BES(3)$,
so the martingale defect is $\df_T\approx0.32$. 
The~$L^1$ error is around~$2.4$ at $h = 2^{-2}$ and around~$10^{14}$
for $h = 2^{-3}$ as a few paths overshoot to very large values, as explained in~\cite{hutzenthaler2011strong}.

\section{Proofs}\label{sec:proofs}
We introduce a few notations and objects of importance for the proofs.
Define
$$
Z_n := \Lambda(X_{n})\ind_{\{X_{n}>0\}}.
$$
and denote~$I_n$ the interval with endpoints $X_{n}$ and $X_{n+1}$.  
Let $\lambda_{R}:=\Lambda(R)$
and 
$$
\tau^{*} := \tau_{2R} = \inf\left\{t: X_{t} =2R\right\},
\qquad 
\widehat{X} := X^{\tau^{*}}
= X_{\cdot\wedge\tau^{*}},
\qquad \widehat{Z} := \Lambda(\widehat{X}),
$$
so $\widehat{X}$ is a bounded martingale. 
For some $R>0$ large enough, define
\begin{equation}\label{eq:BR_LR}
\eps_{R} := \Lambda(2R)-\Lambda(R),\quad
B_R := \sup_{x\leq 2R}|\sigma'(x)|
\quad\text{and}\quad
L_R := \half \sup_{x\leq 2R}\left|\sigma(x)\sigma''(x)\right|,
\end{equation}
note that Assumption~\ref{assu:osl}(ii)(b) ensures that~$B_R$ and~$L_R$ are both finite, and fix
\begin{equation}\label{eq:clambda}
\lambda := \eps_{R}\min\left\{\frac{1}{4},\frac{2^{-\gamma}}{5}\frac{c_{\sigma}}{\cbar}\right\},
\end{equation}
with~$\cbar,c_{\sigma}$ in
Assumption~\ref{assu:osl},
so $\lambda\lesssim\eps_{R}\sim R^{-\gamma}$. 
For $R\ge1\vee2x_1$ and $\eps_{R}$ small, we have
\begin{align}
\lambda \sfr_R & \leq  \half,\qquad \text{with }
\sfr_R := \sup_{0<y\leq R}\frac{\sigma(y)}{y}\leq C_{0} \vee \cbar R^{\gamma},\label{eq:req1}\\
\frac{5\lambda}{2} & \leq \frac{(2R)^{-\gamma}}{2\cbar} \leq \inf_{2x_1\leq y \leq 2R}\left\{\Lambda(y)-\Lambda\left(\frac{y}{2}\right)\right\}.\label{eq:req2}
\end{align}
Set
$\Df_{n}^{z}:=Z_{n} - \widehat{Z}_{t_n}$,
$\Aa_{n} := \{\sup_{s\in[t_n,t_{n+1}]}|W_s-W_{t_n}|\leq \lambda\}$,
with $\PP(\Aa_{n}^c)\leq \pf_{0} := 4\E^{-\frac{\lambda^2}{2h}}$ 
by Gaussian arguments and the reflection principle, and
\begin{equation}\label{eq:nu}
\nunderbar := \min\Big\{n\ge0: X_{n}\notin(0,R) \text{ or } |\Df_{n}^{z}|>\lambda\text{ or } 
\tau^{*}\leq t_n \text{ or }  \Aa_{n} \text{ fails}\Big\}\wedge N.
\end{equation} 
Note that $\nunderbar\leq \nbar$ 
(defined in~\eqref{eq:nY}), 
and that $|\Df_{n}^{z}|\leq \lambda$ for all $n<\nunderbar$, so 
\begin{equation}\label{eq:Gg_c}
\Gg:=\left\{\max_{n\leq N}|\Df^{z}_{n\wedge\nunderbar}|\leq \lambda\right\}\qquad\text{satisfies}\qquad \Gg^c=\{|\Df^{z}_{\nunderbar}|>\lambda\}.
\end{equation}
Note that $X_{\nunderbar}>0$, 
so~$\Df^{z}_{\nunderbar}$ is well defined under the hypotheses of Lemma~\ref{lem:measurable}: 
if $\nunderbar\geq1$ is triggered by $X_{\nunderbar}\notin(0,R)$ then Lemma~\ref{lem:measurable}(iv) gives $X_{\nunderbar}\geq\half X_{\nunderbar-1}>0$, else $X_{\nunderbar}\in(0,R)$.

\subsection{Preliminaries}\label{sec:prelim}
We start with the following results (Lemmas~\ref{lem:measurable},
\ref{lem:onestep} and~\ref{lem:bound}, and Proposition~\ref{prop:L2})
strongly inspired by~\cite{protter2020asymptotic}.
Introduce 
$$
\Bb_{n} := \{\nunderbar\geq n\}\cap\Big\{X_{n}\in(0,R),|\Df^{z}_{n}|\leq \lambda,\tau^{*}>t_{n}\Big\},
$$
so that
$\{\nunderbar>n\}=\Bb_{n}\cap \Aa_{n}$
with $\Aa_{n}\perp\Ff_{t_n}$
and $\Bb_{n}\in\Ff_{t_n}$.
Indeed, 
$\{\nunderbar>n\}=\bigcap_{k\leq n}\{X_k\in(0,R), |\Df^{z}_k|\leq \lambda, \tau^{*}>t_k\}\cap\bigcap_{k<n}\Aa_k\cap\Aa_n$. 
The first two intersections belong to~$\Ff_{t_n}$,
which is~$\Bb_{n}$. 
The event $\Aa_n$ depends only on $(W_s-W_{t_n})_{s\in[t_n,t_{n+1}]}$, hence is independent of~$\Ff_{t_n}$.

\begin{lemma}\label{lem:measurable}
Let Assumption~\ref{assu:osl} hold, $R\geq 1\vee 2x_1$, and assume~$hB_{R}\leq \lambda$ and
\eqref{eq:req1}-\eqref{eq:req2}.
For $n<N$, on~$\Bb_{n}$,
\begin{enumerate}
\item[(i)]
$X_{n}\in(0,R)$ and $|\Df_{n}^{z}|\leq \lambda$, 
hence 
$\widehat{Z}_{t_n}=Z_{t_n} < \lambda_{R}+\lambda$, 
$\Df_{n}^{z} = \Df^{z}_{n\wedge\nunderbar}$
and 
$X_{n}\le2(X_{t_n}\vee x_1)$ and $X_{t_n}\leq 2(X_{n}\vee x_1)$;
\end{enumerate}
and on $\{\nunderbar>n\}$:
\begin{enumerate}
\item[(ii)] $\tau^{*}>t_{n+1}$ so $\widehat{X}_s=X_s\in(0,2R)$ if $s\leq t_{n+1}$ and $|Z_s-Z_{t_n}|\leq \frac{3\lambda}{2}$ if $s\in[t_n,t_{n+1}]$;
\item[(iii)] $|\Delta_n|\leq \lambda$;
\item[(iv)] $\half X_{n}\leq X_{n+1}\leq \frac{3}{2}X_{n}$, 
so that  $I_n\subset (0,\frac{3R}{2}]$ and $\sigma(X_{n})\leq C \sigma(y)$ for all $y\in[\half X_{n},\frac{3}{2}X_{n}]\supset I_n$, with $C$ depending only on $\sigma$;
\item[(v)] $\sup_{s\in[t_n,t_{n+1}]}X_s\le4(X_{n}\vee x_1)$.
\end{enumerate}
\end{lemma}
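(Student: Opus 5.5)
The proof is a direct case analysis on the definitions of $\Bb_n$ and $\{\nunderbar>n\}=\Bb_n\cap\Aa_n$, carried out in the order (i), (ii), (iii), (iv), (v). The tools are the monotonicity of $\Lambda$, the gap estimates \eqref{eq:gap0} and \eqref{eq:req2}, the bound \eqref{eq:req1}, and the condition $hB_R\le\lambda$.

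For (i), $X_n\in(0,R)$ and $|\Df^z_n|\le\lambda$ are part of the definition of $\Bb_n$. Since $\tau^*>t_n$ we have $\widehat Z_{t_n}=Z_{t_n}$, so $Z_{t_n}\le Z_n+\lambda<\lambda_R+\lambda$. Because $n\le\nunderbar$, we get $\Df^z_n=\Df^z_{n\wedge\nunderbar}$. For the comparison $X_n\le 2(X_{t_n}\vee x_1)$, argue by contradiction. Suppose $X_n>2(X_{t_n}\vee x_1)$ and set $y:=X_n$. Then $y\in[2x_1,R]$, and monotonicity of $\Lambda$ together with \eqref{eq:req2} gives
\[
\Lambda(X_n)-\Lambda(X_{t_n})\ge\Lambda(y)-\Lambda(y/2)\ge \tfrac{5\lambda}{2}>\lambda,
\]
which contradicts $|\Df^z_n|\le\lambda$. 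The symmetric inequality is proved the same way. The only change is that $y=X_{t_n}$ lies in $[2x_1,2R]$, which is still covered by \eqref{eq:req2}, because $\Lambda(X_{t_n})<\lambda_R+\lambda<\Lambda(2R)$ (recall $\lambda\le\eps_R/4$).

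For (ii), fix $s\in[t_n,t_{n+1}]$ with $s\le\tau^*$. Writing $Z$ in the form \eqref{eq:Z}, we get
\[
|Z_s-Z_{t_n}|\le|W_s-W_{t_n}|+\int_{t_n}^s|b(Z_u)|\D u\le\lambda+\tfrac{h}{2}B_R\le\tfrac{3\lambda}{2}.
\]
Here we used $\Aa_n$, the identity $|b|=\frac12|\sigma'|\le \frac12 B_R$ on $(0,2R]$, and $hB_R\le\lambda$. It follows that $Z_s<\lambda_R+\tfrac{5\lambda}{2}<\Lambda(2R)$, because $\eps_R\ge 4\lambda$. This is the main obstacle, since the argument is circular: the bound on $b$ needs $s\le\tau^*$. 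I would break the circularity with a continuity/first-hitting argument. If $\tau^*\le t_{n+1}$, apply the estimate at $s=\tau^*$; it gives $\Lambda(2R)=Z_{\tau^*}<\Lambda(2R)$, which is a contradiction. Positivity of $X_s$ holds because $\lf=-\infty$ and $Z$ stays finite. Hence $\tau^*>t_{n+1}$ and the bound holds on the whole interval.

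For (iii), $|\Delta_n|\le\sup_{s}|W_s-W_{t_n}|\le\lambda$ directly on $\Aa_n$.

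For (iv), we have $|X_{n+1}-X_n|=\sigma(X_n)|\Delta_n|\le \sfr_R\lambda X_n\le\tfrac12 X_n$ by \eqref{eq:req1}. Hence $X_{n+1}\in[\tfrac12X_n,\tfrac32X_n]$ and $I_n\subset(0,\tfrac32R]$. For the comparison $\sigma(X_n)\le C\sigma(y)$ on $[\tfrac12X_n,\tfrac32X_n]$, split into two cases. If $X_n\le x_1$, the bound is Assumption~\ref{assu:osl}(ii)(c). If $X_n>x_1$, consider first $y\ge x_1$: Assumption~\ref{assu:osl}(i) gives $\sigma(X_n)/\sigma(y)\le(\cbar/c_\sigma)2^{1+\gamma}$. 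Otherwise $y\in[\tfrac12X_n,x_1)$, so $X_n\in(x_1,2x_1)$, and continuity and positivity of $\sigma$ on the compact set $[x_1/2,2x_1]$ give a uniform bound.

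For (v), combine (i) and (ii). For $s\in[t_n,t_{n+1}]$ we have $\Lambda(X_s)\le\Lambda(X_{t_n})+\tfrac{3\lambda}{2}$. If $X_s>2(X_{t_n}\vee x_1)$, then \eqref{eq:req2} applied at $y=X_s\le2R$ (which holds by (ii)) gives a gap of at least $\tfrac{5\lambda}{2}$, a contradiction. Therefore $X_s\le2(X_{t_n}\vee x_1)\le 2\bigl(2(X_n\vee x_1)\vee x_1\bigr)=4(X_n\vee x_1)$.
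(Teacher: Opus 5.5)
Your proposal is correct and follows the paper's proof essentially step by step. It uses the same contradiction via \eqref{eq:req2} for the two comparisons in (i), the same first-hitting argument at $s=\tau^{*}\wedge t_{n+1}$ to break the circularity in (ii), the same factorisation $X_{n+1}=X_n\bigl(1+\frac{\sigma(X_n)}{X_n}\Delta_n\bigr)$ with \eqref{eq:req1} in (iv), and the same gap argument in (v). The deviations are cosmetic: you bound $X_{t_n}\le 2R$ via $\Lambda(X_{t_n})<\Lambda(2R)$ rather than $\tau^{*}>t_n$, and you split the $\sigma$-comparison in (iv) according to $y\gtrless x_1$ instead of $X_n\gtrless 2x_1$.
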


\begin{proof}
(i) The first two claims are in the definition of~$\Bb_n$, and $\tau^{*}>t_n$ gives $\widehat{Z}_{t_n}=Z_{t_n}$. If $X_{n}>2x_1$ and $X_{n}>2X_{t_n}$, then $\Df_{n}^{z} =
\Lambda(X_{n}) - \Lambda(X_{t_n}) \geq \Lambda(X_{n}) - \Lambda(\frac{X_{n}}{2})
> \lambda$ by~\eqref{eq:req2} (as $2x_1<X_{n}\leq R$), contradicting $|\Df_{n}^{z}| \leq \lambda$; hence $X_{n}\le2(X_{t_n}\vee x_1)$. Symmetrically, if $X_{t_n}>2x_1$ and $X_{t_n}>2X_{n}$, then 
$-\Df_{n}^{z} \geq \Lambda(X_{t_n})-\Lambda(\frac{X_{t_n}}{2})>\lambda$ by~\eqref{eq:req2} 
(as $X_{t_n}<2R$), again a contradiction.

(ii) Let $s\in(t_n,t_{n+1}]$ with $s\leq \tau^{*}$. 
Then $Z_u=Z_{t_n}+(W_u-W_{t_n})+\int_{t_n}^ub(Z_r)\D r$ for $u\leq s$, and since $X_r\in[0,2R]$ for $r\leq s$, $|b(Z_r)|\leq \half B_R$ there. 
On~$\Aa_n$, $|Z_u-Z_{t_n}|\leq \lambda+\half hB_R\leq \frac{3\lambda}{2}$ for $u\leq s$. 
By~(i), $Z_{t_n} < \lambda_{R}+\lambda$, 
so $Z_u < \lambda_{R}+\frac{5\lambda}{2} < \lambda_{R}+\eps_{R} < \Lambda(2R)$. 
Hence $X_u < 2R$ for all $u\leq s$. 
Applying this with $s=\tau^{*}\wedge t_{n+1}$ shows that $\tau^{*}\wedge t_{n+1}$ is not an exit time, that is $\tau^{*}>t_{n+1}$, 
and the bound holds on all of $[t_n,t_{n+1}]$.

(iii) is immediate from the definition of~$\Aa_n$.

(iv) Write $X_{n+1}=X_{n}(1+\frac{\sigma(X_{n})}{X_{n}}\Delta_n)$. 
By~(i)-(iii)-\eqref{eq:req1}, $|\frac{\sigma(X_{n})}{X_{n}}\Delta_n| \leq \sfr_R\lambda \leq \half$, hence $\half X_{n}\leq X_{n+1}\leq \frac{3}{2}X_{n}$ and $I_n\subset[0,\frac{3R}{2}]$. 
For the statement on~$\sigma$: if $X_{n}\leq x_1$ use Assumption~\ref{assu:osl}(ii)(c); 
if $X_{n}\ge2x_1$, every $y\in [\half X_n, \frac{3}{2}X_n]$ satisfies $y\geq \frac{X_{n}}{2}\geq x_1$, so $\sigma(y)
\geq c_{\sigma} y^{1+\gamma}
\geq c_{\sigma}2^{-(1+\gamma)}X_{n}^{1+\gamma}
\geq \frac{c_{\sigma}}{\cbar}2^{-(1+\gamma)}\sigma(X_{n})$; 
if $x_1<X_{n}<2x_1$, 
then $[\half X_n, \frac{3}{2}X_n]\subset[\frac{x_1}{2},3x_1]$, 
where~$\sigma$ is continuous and positive. 

(v) By~(ii), 
$|\Lambda(X_s)-\Lambda(X_{t_n})|\leq \frac{3\lambda}{2}$ and $X_s<2R$ for $s\in[t_n,t_{n+1}]$; 
if $X_s>2(X_{t_n}\vee x_1)$ then 
$\Lambda(X_s) - \Lambda(\frac{X_s}{2}) > \frac{5\lambda}{2}$ by~\eqref{eq:req2}, a contradiction hence $X_s\le2(X_{t_n}\vee x_1)\le4(X_{n}\vee x_1)$ by~(i).
\end{proof}

On $\{n<\nunderbar\}$,  $\tau^{*}>t_{n+1}$ and $X_{n+1}>0$ by Lemma~\ref{lem:measurable}(ii)-(iv), 
\eqref{eq:Z} gives
\begin{equation}\label{eq:recursion}
\Df^{z}_{n+1} = \Df_{n}^{z} + h\left(b(Z_n)-b(\widehat{Z}_{t_n})\right) + \xi_n,
\end{equation}
with
$\xi_n := b(Z_n)(\Delta_n^2-h) +\Xi_n+\widehat{C}_n$
and
$\widehat{C}_n:=h b(\widehat{Z}_{t_n})-\int_{t_n}^{t_{n+1}}b(\widehat{Z}_s) \D s$.

Note that under Assumptions~\ref{assu:osl}-\ref{assu:LR}, 
as $h\downarrow 0$ then
$hR^{2\gamma} = h^{1-2\gamma\alpha}\downarrow 0$ and $hB_R\lesssim hR^{\gamma} = \sqrt h(hR^{2\gamma})^{\half}\le\lambda\sim R^{-\gamma}$, $hR^{2\gamma}\le1$, $h\le1$ and \eqref{eq:req1}-\eqref{eq:req2} hold for~$h$ small enough. 

\begin{lemma}\label{lem:onestep}
Under Assumptions~\ref{assu:osl} and~\ref{assu:LR}, for~$h$ small enough
on~$\Bb_{n}$,
so that the hypotheses of Lemma~\ref{lem:measurable} hold,
and, with~$C$ depending only on~$\sigma$,
\begin{align}
\left|\EE\left[\xi_n\ind_{\Aa_{n}}\mid\Ff_{t_n}\right]\right| & \lesssim h^2 \left(1+X_{n}^{3\gamma}\right) + h\left(B_R+\sqrt{h}R^{2\gamma}\right)\sqrt{\pf_{0}},\label{eq:xi_mean}\\
\EE\left[|\xi_n|^m\ind_{\Aa_{n}}\mid\Ff_{t_n}\right]
& \leq m!\Big(Ch\left(1+X_n^{\gamma}\right)\Big)^m,
\qquad\text{for every integer } m\ge1.\label{eq:xi_moment}
\end{align}
\end{lemma}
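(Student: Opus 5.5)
We work on $\Bb_n\cap\Aa_n=\{\nunderbar>n\}$, where Lemma~\ref{lem:measurable} is available, and treat the three pieces of $\xi_n=b(Z_n)(\Delta_n^2-h)+\Xi_n+\widehat{C}_n$ separately. Here $\Xi_n$ is the remainder in the one-step expansion of $\Lambda$. Since $\Lambda''\sigma^2=-\sigma'=2b\circ\Lambda$, a Taylor expansion of $Z_{n+1}-Z_n=\Lambda(X_n+\sigma(X_n)\Delta_n)-\Lambda(X_n)$ gives $\Delta_n+b(Z_n)\Delta_n^2+\Xi_n$. By~\eqref{eq:derivatives} we write
$$
\Xi_n=\tfrac16(\Lambda'''\sigma^3)(X_n)\Delta_n^3+\tfrac1{24}\Lambda''''(\eta_n)\sigma(X_n)^4\Delta_n^4,
\qquad \eta_n\in I_n .
$$
The general rule is that a quantity with zero unconditional mean has conditional mean on $\Aa_n$ equal to minus its mean on $\Aa_n^c$. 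That mean on $\Aa_n^c$ is then bounded by Cauchy--Schwarz as (its $L^2$ norm)$\times\sqrt{\pf_{0}}$, using $\Aa_n\perp\Ff_{t_n}$. Any $\Ff_{t_n}$-measurable coefficient is bounded by a power of $R$, since $X_n<R$ and the relevant paths stay below $2R$.

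\emph{Mean bound~\eqref{eq:xi_mean}.} First, $\EE[b(Z_n)(\Delta_n^2-h)\ind_{\Aa_n}\mid\Ff_{t_n}]=-b(Z_n)\EE[(\Delta_n^2-h)\ind_{\Aa_n^c}]$. This is bounded by $\tfrac12B_R\cdot\sqrt2\,h\sqrt{\pf_{0}}$.

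Second, the cubic part of $\Xi_n$ has zero unconditional mean and coefficient $|\Lambda'''\sigma^3|(X_n)\lesssim 1+X_n^{2\gamma}\lesssim R^{2\gamma}$. It therefore contributes $\lesssim R^{2\gamma}h^{3/2}\sqrt{\pf_{0}}$.

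Third, for the quartic part, Lemma~\ref{lem:measurable}(iv) gives $\sigma(X_n)\le C\sigma(\eta_n)$ with $\eta_n\le\tfrac32X_n$. Hence $|\Lambda''''(\eta_n)|\sigma(X_n)^4\le C^4|\Lambda''''\sigma^4|(\eta_n)\lesssim 1+X_n^{3\gamma}$, which yields $h^2(1+X_n^{3\gamma})$.

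Fourth, for $\widehat C_n$ we apply It\^o's formula to $b(\widehat Z_s)$ on $[t_n,s]$, stopped at $\tau^*$ so that everything is defined on all of $\Omega$:
$$
b(\widehat Z_s)-b(\widehat Z_{t_n})=\int_{t_n}^s b'(\widehat Z_u)\,\D W_u+\int_{t_n}^s\big(bb'+\tfrac12b''\big)(\widehat Z_u)\,\D u .
$$
On $\{\nunderbar>n\}$, Lemma~\ref{lem:measurable}(ii),(v) give $\tau^*>t_{n+1}$ and $X_u\le4(X_n\vee x_1)$. Since $bb'$ and $b''$ are $\Oo(1+x^{3\gamma})$, the drift contributes $h^2(1+X_n^{3\gamma})$ on $\Aa_n$. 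The stochastic integral, integrated in $s$, has zero mean and $L^2$ norm $\lesssim h^{3/2}\sup_{x\le2R}|\sigma\sigma''|\lesssim h^{3/2}R^{2\gamma}$. It therefore contributes $h\sqrt h R^{2\gamma}\sqrt{\pf_{0}}$. Summing the four contributions gives~\eqref{eq:xi_mean}.

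\emph{Moment bound~\eqref{eq:xi_moment}.} On $\Aa_n$ we have $|\Delta_n|\le\lambda$, and by~\eqref{eq:req1} $\lambda\,\sigma(X_n)/X_n\le\tfrac12$. This is the key smallness: it yields $\lambda(1+X_n^\gamma)\lesssim1$, because $\lambda\lesssim R^{-\gamma}$ and $X_n<R$. We bound the three pieces in turn.

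For the first piece, $|b(Z_n)|\lesssim1+X_n^\gamma$, and the chi-square moments satisfy $\EE|\Delta_n^2-h|^m\le m!(Ch)^m$.

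For $\Xi_n$, we use the third-order Taylor form $\tfrac16\Lambda'''(\eta)\sigma(X_n)^3\Delta_n^3$. Combined with Lemma~\ref{lem:measurable}(iv), this gives $|\Xi_n|\lesssim(1+X_n^{2\gamma})|\Delta_n|^3$. One power of $|\Delta_n|$ is absorbed through $\lambda(1+X_n^\gamma)\lesssim1$, leaving $|\Xi_n|\lesssim(1+X_n^\gamma)\Delta_n^2$. Then $\EE\Delta_n^{2m}=(2m-1)!!\,h^m\le m!(2h)^m$.

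For $\widehat C_n$, the bound is pathwise: $|\widehat C_n|\le h\sup_{x\le4(X_n\vee x_1)}|b'|\cdot\tfrac32\lambda\lesssim h(1+X_n^{2\gamma})\lambda\lesssim h(1+X_n^\gamma)$, by Lemma~\ref{lem:measurable}(ii),(v).

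Finally, $(a+b+c)^m\le3^{m-1}(a^m+b^m+c^m)$, and the factor $3^m$ is absorbed into $C^m$.

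The main obstacle is the $\widehat C_n$ term in the mean estimate. Its It\^o decomposition must be handled with the stopping at $\tau^*$, so that the martingale part has genuinely zero mean off $\Aa_n$. The local growth bound $X_u\le4(X_n\vee x_1)$ is needed to get the $X_n^{3\gamma}$ rather than the $R^{3\gamma}$ factor in the $h^2$ term. Elsewhere, one must take care always to trade powers of $|\Delta_n|$ against $\lambda R^\gamma\lesssim1$ rather than against $h$.
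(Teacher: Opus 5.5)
Your proposal is correct and follows essentially the paper's route. It uses the same three-way split of $\xi_n$ and the same ``zero conditional mean, so pass to $\Aa_n^c$ and apply Cauchy--Schwarz'' device for the $\sqrt{\pf_0}$ terms. It also uses the same It\^o decomposition of $\widehat{C}_n$ into martingale and drift parts, whose integrals should run to $s\wedge\tau^{*}$ as you intend, and pathwise bounds on $\Bb_n\cap\Aa_n$ that trade powers of $\lambda\lesssim R^{-\gamma}$ for the moment estimate.

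The differences are minor simplifications. You bound the cubic Taylor term by Cauchy--Schwarz rather than noting, as the paper does, that it vanishes by the reflection symmetry of $\Aa_n$. You use the third-order Lagrange remainder for $\Xi_n$. You bound $|\widehat{C}_n|$ through $|Z_s-Z_{t_n}|\le\frac{3\lambda}{2}$ from Lemma~\ref{lem:measurable}(ii) instead of through $\overline{W}_n$, which removes the need for moments of the Brownian supremum.
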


 \begin{proof}
Conditional expectations are taken with respect to $\Bb_n\subset\Ff_{t_n}$, 
where $X_{n}\in (0,R)$ 
and $\tau^{*}>t_n$.
On $\Bb_n\cap\Aa_n=\{\nunderbar>n\}$, Lemma~\ref{lem:measurable}(ii)-(v) apply. 
We first prove~\eqref{eq:xi_mean}; the only place where Assumption~\ref{assu:LR} enters is through the hypothesis $hB_R\le\lambda$ of Lemma~\ref{lem:measurable}.
First, 
$\EE[(\Delta_{n}^2-h)\ind_{\Aa_{n}}] = h\PP(\Aa_{n}^c)-\EE[\Delta_{n}^2\ind_{\Aa_{n}^c}]$ and $\EE[\Delta_{n}^2\ind_{\Aa_{n}^c}]\le(\EE\Delta_{n}^4)^{\half}\PP(\Aa_{n}^c)^{\half}\leq \sqrt3 h\sqrt{\pf_{0}}$; 
since $|b(Z_n)|\leq \half B_R$ on~$\Bb_n$, 
then $\EE[b(Z_n)(\Delta_{n}^2-h)\ind_{\Aa_{n}}\ind_{\Bb_{n}}]=\Oo(hB_R\sqrt{\pf_{0}})$ 
and 
$\EE[|b(Z_n)(\Delta_{n}^2-h)|^2\ind_{\Aa_{n}}\ind_{\Bb_{n}}] \leq  2h^2b(Z_n)^2$.
Since $X_{n+1}>0$ on~$\Bb_n\cap \Aa_{n}$,
a straightforward Taylor expansion gives
$Z_{n+1} = Z_n+\Delta_n- \half\sigma'(X_{n})\Delta_n^2 + \Xi_n$
with 
$\Xi_n = \frac{1}{6}\Lambda'''(X_{n})\sigma(X_{n})^3\Delta_n^3 + \frac{1}{24}\Lambda''''(\xi_n')\sigma(X_{n})^4\Delta_n^4$ for some $\xi_n'\in I_n$.
Since~$\Aa_{n}$ is invariant under $(W_{s}-W_{t_n})_s\mapsto-(W_{s}-W_{t_n})_s$, 
$\EE[\Delta_{n}^3\ind_{\Aa_{n}}]=0$ 
and the cubic term has zero conditional mean. 
By Lemma~\ref{lem:measurable}(iv), $\sigma(X_{n})^4|\Lambda''''(\xi_n')|\leq C^4\sup_{J_n}|\Lambda''''\sigma^4|$ with $J_n:=[\half X_{n},\frac{3}{2}X_{n}]$, 
and by~\eqref{eq:derivatives}, 
$\sup_{J_n}|\Lambda''''\sigma^4|\lesssim 1+X_{n}^{3\gamma}$. 
With $\EE[\Delta_{n}^4] = 3h^2$, the quartic term has conditional mean $\Oo(h^2(1+X_{n}^{3\gamma}))$. 
For the second moment, $|\Xi_n|\leq \frac{C^3}{6}\sup_{J_n}|\Lambda'''\sigma^3| |\Delta_{n}|^3$ from the Taylor expansion and Lemma~\ref{lem:measurable}(iv), and $|\Lambda'''\sigma^3|=|2(\sigma')^2-\sigma\sigma''|\le2B_R^2+2L_R\lesssim R^{2\gamma}$ on $J_n\subset(0,2R]$; 
since $\EE[\Delta_{n}^6]=15h^3$, $\EE[\Xi_n^2\ind_{\Aa_{n}}\mid\Ff_{t_n}]\lesssim h^3 R^{4\gamma}$.

Finally, let $\widehat{C}_n := -\int_{t_n}^{t_{n+1}}(b(\widehat{Z}_s)-b(\widehat{Z}_{t_n}))\D s$.
It\^o's formula now yields
$$
b(\widehat{Z}_s)-b(\widehat{Z}_{t_n})
= \int_{t_n}^{s\wedge\tau^{*}}b'(Z_r)\D W_r + \int_{t_n}^{s\wedge\tau^{*}}\left(bb'+\half b''\right)(Z_r)\D r
=: m_{s} + d_{s}.
$$
Clearly, $\EE[m_s^2\mid\Ff_{t_n}]\lesssim h R^{4\gamma}$ and 
$|\EE[m_s\ind_{\Aa_{n}}\mid\Ff_{t_n}]|=|\EE[m_s\ind_{\Aa_{n}^c}\mid\Ff_{t_n}]|\leq \sqrt{h} R^{2\gamma}\sqrt{\pf_{0}}$,
and Lemma~\ref{lem:measurable}(v) yields the claimed bound for~$d_s$ in~\eqref{eq:xi_mean}.

For~\eqref{eq:xi_moment} we bound~$\xi_n$ pathwise on $\Bb_n\cap\Aa_n$, where $|\Delta_n|\le\lambda\lesssim R^{-\gamma}$ and $X_n\le R$.
Write $\overline{W}_n:=\sup_{s\in[t_n,t_{n+1}]}|W_s-W_{t_n}|$.
By~\eqref{eq:derivatives} and Assumption~\ref{assu:osl}, $|b(\Lambda(x))| \lesssim 1+x^{\gamma}$, $|b'(\Lambda(x))| + |\Lambda'''(x)\sigma^3(x)|
\lesssim1+x^{2\gamma}$ and $|\Lambda''''(x)\sigma^4(x)|\lesssim1+x^{3\gamma}$ for all $x>0$. Hence,
\begin{itemize}
\item $|b(Z_n)(\Delta_n^2-h)|\lesssim (1+X_n^{\gamma})(\Delta_n^2+h)$;
\item from the above Taylor expansion, $\Lambda''''(\xi_n')\sigma(X_n)^4 = (\Lambda''''\sigma^4)(\xi_n')(\frac{\sigma(X_n)}{\sigma(\xi_n')})^4$ and Lemma~\ref{lem:measurable}(iv),
$|\Xi_n|\lesssim \left(1+X_n^{2\gamma}\right)\lambda\Delta_n^2 + \left(1+X_n^{3\gamma}\right)\lambda^2\Delta_n^2
\lesssim \left(1+X_n^{\gamma}\right)\Delta_n^2$ 
because $X_n^{2\gamma}\lambda\lesssim X_n^{\gamma}(X_n/R)^{\gamma}\le X_n^{\gamma}$ and $X_n^{3\gamma}\lambda^2\lesssim X_n^{\gamma}$;
\item $\widehat{Z}_s$ and $\widehat{Z}_{t_n}$ both lie in $\Lambda((0,4(X_n\vee x_1)])$ for $s\in[t_n,t_{n+1}]$ by Lemma~\ref{lem:measurable}(ii)-(v), where $|b'|\lesssim 1+X_n^{2\gamma}$, and $|\widehat{Z}_s-\widehat{Z}_{t_n}|\le \overline{W}_n+\half hB_R$ since $|b|\le\half B_R$ on $\Lambda((0,2R])$; 
thus, with $1+X_n^{2\gamma}\le(1+X_n^{\gamma})(1+R^{\gamma})$, 
$hB_R\lesssim hR^{\gamma}$ and $hR^{2\gamma}\le1$,
$$
\left|\widehat{C}_n\right| \lesssim h\left(1+X_n^{2\gamma}\right)
\left(\overline{W}_n+hB_R\right)
\lesssim \left(1+X_n^{\gamma}\right)h\left(1+(1+R^{\gamma})\overline{W}_n\right).
$$
\end{itemize}
Therefore, on $\Bb_n\cap\Aa_n$, $|\xi_n|\le C(1+X_n^{\gamma})\big(\Delta_n^2+h+h(1+R^{\gamma})\overline{W}_n\big)$.
Conditionally on~$\Ff_{t_n}$, $\frac{\Delta_n^2}{h}$ is the square of a Gaussian, with $m$-th moment $(2m-1)!!\le2^mm!$, and $\frac{\overline{W}_n}{\sqrt{h}}$ has the same law as $\sup_{[0,1]}|W|$, whose $m$-th moment is at most $C^m\sqrt{m!}$.
Since $(1+R^{\gamma})\sqrt{h}\lesssim 1+(hR^{2\gamma})^{\half}\lesssim1$,
we thus deduce~\eqref{eq:xi_moment} from
$$
\EE\left[\left(\Delta_n^2+h+h(1+R^{\gamma})\overline{W}_n\right)^m\Big|\Ff_{t_n}\right]
\le 3^{m-1}h^m\left(2^mm!+1+C^m\sqrt{m!}\right)\le m!(Ch)^m.
$$
\end{proof}

The following lemma ensures that the stopped Lamperti error is bounded, so that all expectations below are finite.

\begin{lemma}\label{lem:bound}
Under the hypotheses of Lemma~\ref{lem:measurable}, $|\Df^{z}_{n\wedge\nunderbar}|\le C\lambda$ for all $n\le N$, with~$C$ depending only on~$\sigma$.
\end{lemma}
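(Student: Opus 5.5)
We need to bound $|\Df^{z}_{n\wedge\nunderbar}|$. For $n<\nunderbar$ the bound $\lambda$ holds by definition, so the only case to handle is the stopped index $m:=\nunderbar$ itself (and $n\wedge\nunderbar=m$ whenever $n\ge m$). If $m=0$, then $\Df^z_0=\Lambda(x_0)-\Lambda(x_0)=0$. So assume $m\ge1$. Then $m-1<\nunderbar$, so $\{\nunderbar>m-1\}=\Bb_{m-1}\cap\Aa_{m-1}$ holds, and Lemma~\ref{lem:measurable} applies at index $m-1$. The plan is to write
$$
\Df^z_m = \Df^z_{m-1} + \big(Z_m-Z_{m-1}\big) - \big(\widehat{Z}_{t_m}-\widehat{Z}_{t_{m-1}}\big)
$$
and bound each of the three pieces by a constant multiple of $\lambda$.

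The first piece is at most $\lambda$ by Lemma~\ref{lem:measurable}(i). For the third piece, Lemma~\ref{lem:measurable}(ii) gives $\tau^{*}>t_m$. Hence $\widehat Z=Z$ on $[t_{m-1},t_m]$, and the same item gives $|Z_{t_m}-Z_{t_{m-1}}|\le\frac{3\lambda}{2}$.

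The main step is the second piece, the one-step increment of the Lamperti transform of the scheme. By Lemma~\ref{lem:measurable}(iv), $X_m\in[\half X_{m-1},\frac32X_{m-1}]$, so $X_m>0$ and $Z_m=\Lambda(X_m)$. By the mean value theorem,
$$
|Z_m-Z_{m-1}|=\frac{|X_m-X_{m-1}|}{\sigma(y)}=\frac{\sigma(X_{m-1})|\Delta_{m-1}|}{\sigma(y)}
$$
for some $y\in I_{m-1}\subset[\half X_{m-1},\frac32X_{m-1}]$. Lemma~\ref{lem:measurable}(iv) gives $\sigma(X_{m-1})\le C\sigma(y)$, and Lemma~\ref{lem:measurable}(iii) gives $|\Delta_{m-1}|\le\lambda$. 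Together these yield $|Z_m-Z_{m-1}|\le C\lambda$.

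Summing the three pieces gives $|\Df^z_m|\le(1+C+\frac32)\lambda$, where the constant depends only on $\sigma$ through the comparison constant of Lemma~\ref{lem:measurable}(iv). The only point requiring care is that every quantity is evaluated on $\{\nunderbar>m-1\}$ rather than at the stopping index itself. This is what makes Lemma~\ref{lem:measurable} applicable and guarantees $X_m>0$, so that $\Df^z_m$ is well defined whichever condition in~\eqref{eq:nu} triggered the stop.
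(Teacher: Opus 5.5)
Your proof is correct and follows the paper's argument. You decompose $\Df^z_{\nunderbar}$ into three pieces, all controlled by Lemma~\ref{lem:measurable} applied at index $\nunderbar-1<\nunderbar$: the previous error $\Df^z_{\nunderbar-1}$, the scheme's Lamperti increment (bounded via (iii)--(iv) and the mean value theorem), and the exact increment (bounded via (ii)). The only difference is that you skip the paper's case split on which condition in~\eqref{eq:nu} triggered the stop; this is harmless, since the increment argument applies uniformly whenever $\nunderbar\ge1$.
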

\begin{proof}
If $n<\nunderbar$ then $|\Df^{z}_n|\le\lambda$ by~\eqref{eq:nu} and $\Df^{z}_0=0$.
Take $n=\nunderbar\ge1$. 
Either~$\Aa_n$ fails and the other three conditions in~\eqref{eq:nu} hold at~$n$, 
and $|\Df^z_n|\le\lambda$;
or $n=N$ with all conditions satisfied, and $|\Df^z_N|\le\lambda$;
or one event of $\{X_n\notin(0,R), |\Df^z_n|>\lambda, \tau^{*}\le t_n\}$ occurs.
In the last case $n-1<\nunderbar$, so Lemma~\ref{lem:measurable} applies at $n-1$: by~(iv), $X_n\in I_{n-1}\subset[\half X_{n-1},\frac{3}{2}X_{n-1}]$, in particular $X_n>0$, $Z_n=\Lambda(X_n)$ and $\sigma(X_{n-1})\le C\sigma(y)$ for $y\in I_{n-1}$, so
$$
|Z_n-Z_{n-1}| = \left|\int_{X_{n-1}}^{X_n}\frac{\D y}{\sigma(y)}\right|
\le\frac{|X_n-X_{n-1}|}{\inf_{I_{n-1}}\sigma}
= \frac{\sigma(X_{n-1})|\Delta_{n-1}|}{\inf_{I_{n-1}}\sigma}\le C|\Delta_{n-1}|\le C\lambda;
$$
by~(ii), $\tau^{*}>t_n$ and $|\widehat{Z}_{t_n}-\widehat{Z}_{t_{n-1}}| = |Z_{t_n}-Z_{t_{n-1}}|\le\frac{3\lambda}{2}$,
so $|\Df^z_n|\le|\Df^z_{n-1}|+C\lambda+\frac{3\lambda}{2}\le C'\lambda$.
\end{proof}

\begin{proof}[Proof of Lemma~\ref{lem:VT}]
Let $\chi\in\Cc^\infty(\RR)$ non-decreasing, null on $(-\infty,\frac{x_1}{2}]$ 
equal to~$1$ on $[x_1,\infty)$ 
and
$g(x) := \chi(x)\frac{1+x^{2\gamma}}{\sigma(x)^2}$ for $x>0$.
By Assumption~\ref{assu:osl}(i), with $F'(x):=-\int_x^\infty g(y)\D y$ then $F\in\Cc^2((0,\infty))$, $F''=g$, $F'$ is bounded and constant on $(0,\frac{x_1}{2}]$, and $|F'(x)|\lesssim \frac{1}{x}$ for $x\ge x_1$, hence, for all $x>0$,
\begin{equation}\label{eq:Fbounds}
|F(x)-F(x_0)|\le C_F\left(1 + \log(x)_{+}\right)
\qquad\text{and}\qquad
F'(x)^2\sigma(x)^2\le C_M\left(1+x^{2\gamma}\right).
\end{equation}
It\^o's formula gives, for every stopping time $\tau\le T$,
$$
F(X_\tau)-F(x_0) = M_\tau+\half\int_0^\tau\chi(X_s)\left(1+X_s^{2\gamma}\right)\D s,
$$
with
$M_t:=\int_0^t F'(X_s)\sigma(X_s)\D W_s$
and
$\langle M\rangle_t\le C_MV_t$.
Since $\chi\ge\ind_{[x_1,\infty)}$,
\begin{equation}\label{eq:Vrep}
V_\tau\le T\left(1+x_1^{2\gamma}\right)+2\left(F(X_\tau)-F(x_0)\right)-2M_\tau.
\end{equation}
Let $\tau_k:=\inf\{t:X_t\notin(\frac{1}{k},k)\}\wedge T$, so $V_{\tau_k}\le T(1+k^{2\gamma})$; 
since~$X$ does not hit~$0$ nor explodes on $[0,T]$ by Assumption~\ref{assu:allassu}, then $\tau_k\uparrow T$ as $k\uparrow\infty$.
Doob's inequality for the non-negative supermartingale~$X$ 
gives $\PP(\overline{X}>m)\le\frac{x_0}{m}$
with $\overline{X}:=\sup_{t\le T}X_t$, hence $\EE[(1+\overline{X})^r]$ is finite for all $r<1$.
For $\kappa>0$, $\Phi_k(\kappa):=\EE[\exp\{\kappa V_{\tau_k}\}]<\infty$; by~\eqref{eq:Vrep}-\eqref{eq:Fbounds} and Cauchy-Schwarz,
$$
\Phi_k(\kappa)\le C_\kappa\EE\left[\left(1+\overline{X}\right)^{4C_F\kappa}\right]^{\half}\EE\left[\E^{-4\kappa M_{\tau_k}}\right]^{\half},
\quad \text{with }C_\kappa := \exp\left\{\kappa T\left(1+x_1^{2\gamma}\right) + 2\kappa C_F\right\}.
$$
Writing $\E^{-4\kappa M} = \E^{-4\kappa M-16\kappa^2\langle M\rangle}\E^{16\kappa^2\langle M\rangle}$, 
the identity $\EE[\E^{-8\kappa M_{\tau_k}-32\kappa^2\langle M\rangle_{\tau_k}}]\le1$ and Cauchy-Schwarz give
$$
\EE\left[\E^{-4\kappa M_{\tau_k}}\right]\le\EE\left[\E^{32\kappa^2\langle M\rangle_{\tau_k}}\right]^{\half}\le\sqrt{\Phi_k\left(32C_M\kappa^2\right)}.
$$
Pick $\kappa_0>0$ with $4C_F\kappa_0<1$ and $32C_M\kappa_0\le1$. For $\kappa\le\kappa_0$, $32C_M\kappa^2\le\kappa$, so $\Phi_k(32C_M\kappa^2)\le\Phi_k(\kappa)$ hence $\Phi_k(\kappa)\le C'\Phi_k(\kappa)^{\frac{1}{4}}$ with $C':=C_\kappa\EE[(1+\overline{X})^{4C_F\kappa}]^{\half}$ finite,
that is $\Phi_k(\kappa)\le C'^{\frac{4}{3}}$ uniformly in~$k$. 
Monotone convergence as $k\uparrow\infty$ concludes.
\end{proof}

\begin{proposition}\label{prop:L2}
Under Assumptions~\ref{assu:allassu}-\ref{assu:osl}-\ref{assu:LR}, for any $q\ge2$ there is~$C_q$ such that
$$
\EE\left[\max_{n\le N}\left|\Df^z_{n\wedge\nunderbar}\right|^q\right]\le C_qh^{\frac{q}{2}}+\rho_h,
\qquad\text{for~$h$ small enough,}
$$
where~$\rho_h$ denotes a generic quantity of the form $R^aN^b\sqrt{\pf_0}$ with $a,b\ge0$.
\end{proposition}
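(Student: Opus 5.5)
We work with the stopped recursion~\eqref{eq:recursion} and write, for $n\le N$,
$$
\Df^z_{n\wedge\nunderbar}=\sum_{k<n\wedge\nunderbar}h\left(b(Z_k)-b(\widehat{Z}_{t_k})\right)+\sum_{k<n\wedge\nunderbar}\xi_k,
$$
valid since on $\{k<\nunderbar\}$ the identity~\eqref{eq:recursion} holds. Note that $\{k<\nunderbar\}=\Bb_k\cap\Aa_k$, so every summand carries the indicator $\ind_{\Bb_k}\ind_{\Aa_k}$ with $\Bb_k\in\Ff_{t_k}$. We split $\xi_k\ind_{\Bb_k\cap\Aa_k}=\mu_k+\eta_k$, where $\mu_k:=\ind_{\Bb_k}\EE[\xi_k\ind_{\Aa_k}\mid\Ff_{t_k}]$ is the predictable part and $\eta_k$ is a martingale difference with respect to $(\Ff_{t_{k+1}})$. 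The drift term is handled by a one-sided Lipschitz argument. Because $b'\le K_0$, the increment $h(b(Z_k)-b(\widehat Z_{t_k}))$ has sign-controlled product with $\Df^z_k$. However, a pathwise Gr\"onwall argument in $L^q$ requires a two-sided bound. Since $Z_k,\widehat Z_{t_k}$ lie in $\Lambda((0,2R])$, where $|b'|\lesssim 1+X^{2\gamma}$, we can bound $|b(Z_k)-b(\widehat Z_{t_k})|\lesssim(1+X_{t_k}^{2\gamma})|\Df^z_k|$ using Lemma~\ref{lem:measurable}(i),(v). This produces a random Gr\"onwall factor $\exp\{C\sum_k h(1+X_{t_k}^{2\gamma})\}$, which is comparable to $\E^{CV_T}$. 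Its exponential moment is controlled only for small constants by Lemma~\ref{lem:VT}. To avoid a large constant, we would instead apply It\^o-type expansion to $|\Df^z|^2$ and use $b'\le K_0$:
$$
(\Df^z_{k+1})^2\le(1+2K_0h+Ch^2(1+X_k^{4\gamma}))(\Df^z_k)^2+2\Df^z_k\xi_k+\xi_k^2.
$$
Here the $h^2$-term is absorbed since $hR^{2\gamma}\le1$, which gives a deterministic factor $\E^{CT}$. Thus the drift yields only a constant Gr\"onwall multiplier.

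The next step is to estimate the remaining terms in $\EE[\max_n|\Df^z_{n\wedge\nunderbar}|^q]$. Raising the discrete inequality to the power $q/2$ and using Burkholder--Davis--Gundy on the martingale $\sum 2\Df^z_k\eta_k$, we need three bounds.

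First, the predictable mean terms. By~\eqref{eq:xi_mean}, $\sum_k|\Df^z_k||\mu_k|\lesssim\sum_k|\Df^z_k|h^2(1+X_k^{3\gamma})+\rho_h$. Using $|\Df^z_k|\le\lambda\lesssim R^{-\gamma}$ and $hR^{2\gamma}\le1$, we get $h^2X_k^{3\gamma}|\Df^z_k|\lesssim h^{3/2}X_k^{\gamma}\cdot(\sqrt h R^{\gamma})\cdots$. Rather than rely on this crude bound, we keep the factor $|\Df^z_k|$ and apply Young's inequality: $|\Df^z_k|h^2(1+X_k^{3\gamma})\le h|\Df^z_k|^2+h^3(1+X_k^{6\gamma})$. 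We then bound $h^3X_k^{6\gamma}\le h^{3}R^{4\gamma}X_k^{2\gamma}\le hX_k^{2\gamma}$, so this contributes $h\cdot hV$-type terms of order $h$ after summation. This step relies on Lemma~\ref{lem:VT} together with the comparison $X_k\le2(X_{t_k}\vee x_1)$.

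Second, the quadratic terms $\sum\xi_k^2$. By~\eqref{eq:xi_moment} with $m$ up to $q$, these are bounded by $h^2(1+X_k^{\gamma})^2$. Their sum is $\lesssim h\cdot h\sum_k(1+X_{t_k}^{2\gamma})\approx hV_T$, and its $q/2$-moment is finite by Lemma~\ref{lem:VT}. The time discretisation of $V_T$ is handled via Lemma~\ref{lem:measurable}(v).

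Third, the BDG bracket $\sum(\Df^z_k)^2\eta_k^2\le\max_k(\Df^z_k)^2\sum\eta_k^2$. We treat it by Young's inequality, absorbing $\frac12\EE\max|\Df^z|^q$ into the left-hand side. This absorption is legitimate because Lemma~\ref{lem:bound} ensures finiteness.

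Every occurrence of $\Aa_k^c$ or of conditioning defects produces at most polynomially many terms in $R$ and $N$ multiplied by $\sqrt{\pf_0}$, and these are collected into $\rho_h$. The main obstacle is the first step: controlling the drift difference without an $\E^{CV_T}$ factor of uncontrolled size. We handle it through the one-sided bound $b'\le K_0$ in the squared recursion. The residual Taylor terms in that recursion must be kept quadratic in $\Df^z$ and paired with $hR^{2\gamma}\le1$.
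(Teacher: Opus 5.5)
Your architecture matches the paper's: the squared recursion with $b'\le K_0$, the split of $\xi_k$ into a predictable part and a martingale difference, and BDG with absorption justified by Lemma~\ref{lem:bound}. But there is a genuine gap. Both places where you claim an $O(h)$ contribution rely on a scaling that Assumption~\ref{assu:LR} does not provide.

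\textbf{The drift remainder.} You keep $Ch^2(1+X_k^{4\gamma})(\Df^z_k)^2$ inside the Gr\"onwall factor. From $X_k\le R$ and $hR^{2\gamma}\le1$ you only get $h^2X_k^{4\gamma}\le h\cdot hR^{4\gamma}$, and $hR^{4\gamma}=h^{1-4\gamma\alpha}$ is bounded only when $\alpha\le\frac{1}{4\gamma}$. For $\alpha\in(\frac{1}{4\gamma},\frac{1}{2\gamma})$ the deterministic multiplier is of order $\exp\{CTh^{1-4\gamma\alpha}\}\to\infty$; an example is $\CEV(\gamma)$ with $\gamma\in(\half,1)$ and $\alpha=\half$. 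The other available bound, $h^2X_k^{4\gamma}\le(hR^{2\gamma})\,hX_k^{2\gamma}$, brings back the random factor $\exp\{C\,hR^{2\gamma}V\}$ that you set out to avoid.

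\textbf{The mean terms.} Your Young step has the same defect. It gives $\sum_k h^3X_k^{6\gamma}\ind_{\Bb_k}\le h^2R^{4\gamma}\cdot h\sum_k X_k^{2\gamma}\ind_{\Bb_k}\le (hR^{2\gamma})^2V_h$, not an ``$h\cdot hV$'' term. Since $(hR^{2\gamma})^2=h^{2-4\gamma\alpha}$, this is $\lesssim h$ again only for $\alpha\le\frac{1}{4\gamma}$. As written, your argument proves the proposition only on that sub-range.

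\textbf{The missing idea.} Spend the localisation $|\Df^z_k|\le\lambda\lesssim R^{-\gamma}$ on $\Bb_k$; this is exactly why $\lambda$ is taken of order $R^{-\gamma}$. It lets you treat both terms additively rather than multiplicatively:
\begin{itemize}
\item for the drift, $h^2\bigl(b(Z_k)-b(\widehat Z_{t_k})\bigr)^2\lesssim h^2(1+X_k^{4\gamma})\lambda^2\lesssim h^2(1+X_k^{2\gamma})$;
\item for the mean, $\ind_{\Bb_k}|\Df^z_k\mu_k|\lesssim h^2\lambda(1+X_k^{3\gamma})+\rho_h\lesssim h^2(1+X_k^{2\gamma})+\rho_h$;
\end{itemize}
both using $X_k\le R$. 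Both sums are then $\lesssim hV_h+\rho_h$, and the Gr\"onwall weight is simply the deterministic $(1+2K_0h)^{-n}$. This is the ``crude bound'' you discarded, and it is what the paper does.

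Two further steps are only asserted and need more than you wrote.

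\textbf{Moments of $\sum_k\xi_k^2$.} \eqref{eq:xi_moment} controls $\xi_k$ only through conditional moments, so $\EE[(\sum_k\xi_k^2\ind_{\Bb_k\cap\Aa_k})^{m}]\lesssim h^{m}(1+\EE[V_h^m])$ does not follow directly. You need a second decomposition $\sum_k a_k=\sum_k\bar a_k+L_N$ into conditional means plus a martingale. Then apply BDG to $L$, and absorb $\eps\EE[Q_N^m]$ using $\sum_k a_k^2\le\max_k a_k\sum_k a_k$. Finally, bound $\EE[\max_k a_k^m]$ via the $2m$-th moment in \eqref{eq:xi_moment}, together with $X_k^{2\gamma m}\le R^{2\gamma(m-1)}X_k^{2\gamma}$ and $(hR^{2\gamma})^{m-1}\le1$. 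A term-by-term bound loses powers of $R$ because of the heavy tails of $X$. Your BDG bracket $\sum_k\eta_k^2$ reduces to this estimate.

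\textbf{Comparing $V_h$ with $V_T$.} Comparing $V_h=h\sum_k(1+X_k^{2\gamma})\ind_{\Bb_k}$ with $V_T$ needs a \emph{lower} bound $X_s\ge\half X_{t_k}$ on $[t_k,t_{k+1}]$, plus $X_k\le 2(X_{t_k}\vee x_1)$. The steps where $\Aa_k$ fails must be counted separately, as in Lemma~\ref{lem:riemann}. Lemma~\ref{lem:measurable}(v), which you cite, is an upper bound and points the wrong way.
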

\begin{remark}
Under Assumption~\ref{assu:LR}, $\rho_{h} = o(h^M)$ for every $M>0$ 
since $\pf_0 = 4\exp\{-\frac{\lambda^2}{2h}\}$ and $\lambda^2\sim R^{-2\gamma} = h^{2\gamma\alpha}$ with $2\gamma\alpha<1$.
\end{remark}

\begin{proof}
Write $m:=\frac{q}{2}\ge1$ and $Y_n:=|\Df^z_{n\wedge\nunderbar}|^2$, so that $0\le Y_n\le C\lambda^2$ by Lemma~\ref{lem:bound};
all the other random variables below are bounded for fixed~$h$ (on $\Bb_k\cap\Aa_k$ the increments are bounded by the estimates in the proof of Lemma~\ref{lem:onestep}, with $|\Delta_k|\le\lambda$ and $S_k\le\lambda$), 
so all expectations are finite. 
Write also $\Df^b_n:=b(Z_n)-b(\widehat{Z}_{t_n})$.

\emph{Step 1.}
On $\{n<\nunderbar\}=\Bb_n\cap\Aa_n$, by~\eqref{eq:recursion} and $\Df^z_n\Df^b_n\le K_0|\Df^z_n|^2$ from Assumption~\ref{assu:osl},
\begin{align*}
|\Df^z_{n+1}|^2 & = |\Df^z_n|^2+2h\Df^z_n\Df^b_n+2\Df^z_n\xi_n+(h\Df^b_n+\xi_n)^2\\
& \le (1+2K_0h)|\Df^z_n|^2+2h^2(\Df^b_n)^2+2\Df^z_n\xi_n+2\xi_n^2.
\end{align*}
By the mean value theorem, $|\Df^b_n|\le\sup|b'||\Df^z_n|$, 
the supremum being over the interval between~$Z_n$ and $\widehat{Z}_{t_n}=\Lambda(X_{t_n})$; 
by Lemma~\ref{lem:measurable}(i), $X_{t_n}\le2(X_n\vee x_1)$, so $\sup|b'|\lesssim1+X_n^{2\gamma}$ by~\eqref{eq:derivatives}. Since moreover $|\Df^z_n|\le\lambda\lesssim R^{-\gamma}$ on~$\Bb_n$, $X_n\le R$ and $R\ge1$, 
then
$$
h^2(\Df^b_n)^2\lesssim h^2\left(1+X_n^{4\gamma}\right)R^{-2\gamma}\lesssim h^2\left(1+X_n^{2\gamma}\right).
$$
Let $\mu_n:=\EE[\xi_n\ind_{\Bb_n\cap\Aa_n}\mid\Ff_{t_n}] = \ind_{\Bb_n}\EE[\xi_n\ind_{\Aa_n}\mid\Ff_{t_n}]$ and $\xi^0_n:=\xi_n\ind_{\Bb_n\cap\Aa_n}-\mu_n$, 
a bounded $(\Ff_{t_{n+1}})_n$-martingale difference  
($\xi_n$ is $\Ff_{t_{n+1}}$-measurable). As $\Bb_n\in\Ff_{t_n}$,
$\ind_{\{n<\nunderbar\}}\Df^z_n\xi_n = \ind_{\Bb_n}\Df^z_n\xi^0_n+\ind_{\Bb_n}\Df^z_n\mu_n$
and $|\Df^z_n|\le\lambda$, $\lambda X_n^{3\gamma}\lesssim R^{-\gamma}R^{\gamma}X_n^{2\gamma}$, $\lambda(B_R+\sqrt hR^{2\gamma})\lesssim R^{\gamma}$ by~\eqref{eq:xi_mean}, 
so that
$$
\ind_{\Bb_n}|\Df^z_n\mu_n|\lesssim\ind_{\Bb_n}\left(h^2\left(1+X_n^{2\gamma}\right)+hR^{\gamma}\sqrt{\pf_0}\right).
$$
Altogether, since $Y_{n+1}-Y_n = (|\Df^z_{n+1}|^2-|\Df^z_n|^2)\ind_{\{n<\nunderbar\}}$,
\begin{equation}\label{eq:onestepY}
Y_{n+1}\le(1+2K_0h)Y_n+2\ind_{\Bb_n}\Df^z_n\xi^0_n+2\xi_n^2\ind_{\Bb_n\cap\Aa_n}+Ch^2\left(1+X_n^{2\gamma}\right)\ind_{\Bb_n}+ChR^{\gamma}\sqrt{\pf_0}.
\end{equation}

\emph{Step 2.}
With $w_n:=(1+2K_0h)^{-n}\in[\E^{-2K_0T},1]$ for $n\le N$, \eqref{eq:onestepY} reads $w_{n+1}Y_{n+1}\le w_nY_n+w_{n+1}(\cdots)$; summing and using $Y_0=0$,
\begin{gather*}
\max_{n\le N}Y_n\le\E^{2K_0T}\left(\max_{n\le N}|M_n|+S_N\right),
\qquad
M_n := 2\sum_{k<n}w_{k+1}\ind_{\Bb_k}\Df^z_k\xi^0_k,\\
S_N:=\sum_{k<N}\left(2\xi_k^2\ind_{\Bb_k\cap\Aa_k}+Ch^2\left(1+X_k^{2\gamma}\right)\ind_{\Bb_k}+ChR^{\gamma}\sqrt{\pf_0}\right),
\end{gather*}
where~$M$ is a martingale.
Hence $\EE[\max_nY_n^m]\lesssim\EE[\max_n|M_n|^m]+\EE[S_N^m]$.

\emph{Step 3.}
Let $Q_N:=\sum_{k<N}\xi_k^2\ind_{\Bb_k\cap\Aa_k}$ and $Q'_N:=\sum_{k<N}\ind_{\Bb_k}(\xi^0_k)^2\le2Q_N+2\sum_{k<N}\ind_{\Bb_k}\mu_k^2$.
By the Burkholder-Davis-Gundy inequality and $\ind_{\Bb_k}|\Df^z_k|^2\le Y_k$ (recall that $k\le\nunderbar$ on~$\Bb_k$),
\begin{align*}
\EE\left[\max_n|M_n|^m\right]
\lesssim \EE\left[\left|\sum_{k<N}\ind_{\Bb_k}|\Df^z_k|^2(\xi^0_k)^2\right|^{\frac{m}{2}}\right]
& \le C \EE\left[\max_nY_n^{\frac{m}{2}} Q_N'^{\frac{m}{2}}\right]\\
&\le\eps \EE\left[\max_nY_n^m\right]+C_\eps \EE\left[Q_N'^{m}\right],
\end{align*}
for every $\eps>0$.
By~\eqref{eq:xi_mean}, $X_k^{6\gamma}\le R^{4\gamma}X_k^{2\gamma}$ and $hR^{2\gamma}\le1$ and hence
$$
\sum_{k<N}\ind_{\Bb_k}\mu_k^2
\lesssim\sum_{k<N}\ind_{\Bb_k}\left(h^4\left(1+X_k^{6\gamma}\right)+h^2R^{4\gamma}\pf_0\right)
\lesssim h\left(hR^{2\gamma}\right)^2V_h+ThR^{4\gamma}\pf_0\le ChV_h+\rho_h.
$$
Choosing~$\eps$ small, it remains to bound $\EE[Q_N^m]$, $\EE[(hV_h)^m]$ and $\EE[S_N^m]$, where $S_N\le2Q_N+ChV_h+\rho_h$.

\emph{Step 4.}
Let $a_k:=\xi_k^2\ind_{\Bb_k\cap\Aa_k}$ and $\bar a_k:=\EE[a_k\mid\Ff_{t_k}] = \ind_{\Bb_k}\EE[\xi_k^2\ind_{\Aa_k}\mid\Ff_{t_k}]$, so that $\bar a_k\lesssim h^2(1+X_k^{2\gamma})\ind_{\Bb_k}$ by~\eqref{eq:xi_moment} with $m=2$, and $\sum_k\bar a_k\lesssim hV_h$.
Then $Q_N=\sum_k\bar a_k+L_N$ with~$L$ a martingale, and by Burkholder-Davis-Gundy and $(a_k-\bar a_k)^2\le2a_k^2+2\bar a_k^2$,
$$
\EE\left[|L_N|^m\right]
\lesssim \EE\left[\left(\sum_ka_k^2\right)^{\frac{m}{2}}\right] + \EE\left[\left(\sum_k\bar a_k^2\right)^{\frac{m}{2}}\right].
$$
Since $\sum_k\bar a_k^2\le\max_k\bar a_k\sum_k\bar a_k\lesssim h^2(1+R^{2\gamma})hV_h\lesssim h^2V_h$, the last term is at most $Ch^m\EE[V_h^{\frac{m}{2}}]$.
For the other one, $\sum_ka_k^2\le(\max_ka_k)Q_N$, so that Young's inequality implies
\begin{align*}
\EE\left[\Big(\sum_ka_k^2\Big)^{\frac{m}{2}}\right] & \le\eps \EE[Q_N^m]+C_\eps \EE\left[\max_ka_k^m\right],\\
\EE\left[\max_ka_k^m\right] & \le\sum_{k<N}\EE\left[\ind_{\Bb_k}\EE\left[\xi_k^{2m}\ind_{\Aa_k}\mid\Ff_{t_k}\right]\right]
\lesssim h^{2m} \EE\left[\sum_{k<N}\ind_{\Bb_k}\left(1+X_k^{2\gamma m}\right)\right],
\end{align*}
using~\eqref{eq:xi_moment} with $2m$ in place of~$m$. As $X_k^{2\gamma m}\le R^{2\gamma(m-1)}X_k^{2\gamma}$, the last expectation is at most $(1+R^{2\gamma(m-1)})\frac{\EE[V_h]}{h}$ and $h^{2m-1}R^{2\gamma(m-1)} \le h^m$. 
Therefore
$\EE[Q_N^m]\lesssim h^m\EE[V_h^m]+\eps\EE[Q_N^m]+h^m\EE[V_h^{\frac{m}{2}}]+h^m\EE[V_h]$,
hence $\EE[Q_N^m]\lesssim h^m(1+\EE[V_h^m])$
for small enough~$\eps$.

\emph{Conclusion.}
By Lemmas~\ref{lem:riemann} and~\ref{lem:VT}, $\EE[V_h^m]\lesssim\EE[V_T^m]+\EE[\Nn^m]\le C+\rho_h$. Steps~2-4 give $\EE[\max_nY_n^m]\lesssim h^m+\rho_h$, which is the claim for even integers~$q=2m$ since $Y_n^m = |\Df^z_{n\wedge\nunderbar}|^q$.
For $q\ge2$ real, 
let $q':=2\lceil q/2\rceil$; Jensen's inequality gives $\EE[\max_n|\Df^z_{n\wedge\nunderbar}|^q]\le\EE[\max_n|\Df^z_{n\wedge\nunderbar}|^{q'}]^{\frac{q}{q'}}\lesssim h^{\frac{q}{2}}+\rho_h^{\frac{q}{q'}}$, and $\rho_h^{q/q'}$ is again of the same form.
\end{proof}

The following is a short technical lemma needed in the conclusion of the previous proof:
\begin{lemma}\label{lem:riemann}
Under the hypotheses of Lemma~\ref{lem:measurable} and $hR^{2\gamma}\le1$, let
$\Nn:= \sum_{k<N}\ind_{\Aa_k^c}$ and $V_h:=h\sum_{k<N}(1+X_k^{2\gamma})\ind_{\Bb_k}$.
Then $V_h\le C(V_T+\Nn)$ with~$V_T$ as in Lemma~\ref{lem:VT}, and $\EE[\Nn^m]\le N^m\pf_0$ for every $m\ge1$.
\end{lemma}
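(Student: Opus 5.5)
We prove Lemma~\ref{lem:riemann} by a pathwise comparison between the Riemann sum $V_h$ and the integral $V_T$, and then a crude union bound for~$\Nn$.

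\textbf{The bound on $\Nn$.} Since $\Nn\le N$, we have $\Nn^m\le N^{m-1}\Nn$ pathwise. Each $\Aa_k$ has $\PP(\Aa_k^c)\le\pf_0$, so
$$
\EE[\Nn^m]\le N^{m-1}\sum_{k<N}\PP(\Aa_k^c)\le N^m\pf_0 .
$$

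\textbf{Splitting $V_h$.} Split according to whether $\Aa_k$ holds:
$$
V_h = h\sum_{k<N}\left(1+X_k^{2\gamma}\right)\ind_{\Bb_k\cap\Aa_k} + h\sum_{k<N}\left(1+X_k^{2\gamma}\right)\ind_{\Bb_k\cap\Aa_k^c}.
$$
\emph{Second sum.} On~$\Bb_k$ we have $X_k<R$, and $hR^{2\gamma}\le1$. Hence each term is at most $h(1+R^{2\gamma})\le 2$, so this sum is at most $2\Nn$.

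\emph{First sum.} Here we are on $\Bb_k\cap\Aa_k=\{\nunderbar>k\}$, so Lemma~\ref{lem:measurable} applies and we compare $X_k$ with the continuous path on $[t_k,t_{k+1}]$.
\begin{itemize}
\item By Lemma~\ref{lem:measurable}(i), $X_k\le2(X_{t_k}\vee x_1)$.
\item By Lemma~\ref{lem:measurable}(ii), $|\Lambda(X_s)-\Lambda(X_{t_k})|\le\frac{3\lambda}{2}$ and $X_s<2R$ for $s\in[t_k,t_{k+1}]$.
\item Suppose $X_{t_k}>2(X_s\vee x_1)$. Then $X_{t_k}\in(2x_1,2R)$, so by~\eqref{eq:req2}, $\Lambda(X_{t_k})-\Lambda(X_s)\ge\Lambda(X_{t_k})-\Lambda(X_{t_k}/2)\ge\frac{5\lambda}{2}>\frac{3\lambda}{2}$, which is a contradiction.
\end{itemize}
This mirrors the proof of Lemma~\ref{lem:measurable}(v) with the roles of $s$ and $t_k$ swapped. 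It gives $X_{t_k}\le2(X_s\vee x_1)$, hence $X_k\le4(X_s\vee x_1)$ for every $s\in[t_k,t_{k+1}]$. Therefore
$$
1+X_k^{2\gamma}\le C\left(1+X_s^{2\gamma}\right),\qquad s\in[t_k,t_{k+1}],
$$
with $C$ depending only on $\gamma$ and $x_1$. Integrating over $s\in[t_k,t_{k+1}]$ bounds each term by $C\int_{t_k}^{t_{k+1}}(1+X_s^{2\gamma})\D s$. Summing over $k$ bounds the first sum by $CV_T$.

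\textbf{Conclusion.} Combining the two sums gives $V_h\le C(V_T+\Nn)$. The only delicate point is the reverse comparison $X_{t_k}\lesssim X_s\vee x_1$ uniformly on the subinterval. It relies on~\eqref{eq:req2} being valid up to level $2R$, which is exactly where $\tau^*>t_{k+1}$ from Lemma~\ref{lem:measurable}(ii) is used.
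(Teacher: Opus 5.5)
Your proof is correct and follows essentially the same route as the paper. You split on $\Aa_k$ versus $\Aa_k^c$, bound the $\Aa_k^c$ terms by $h+hR^{2\gamma}\le 2$ (here $h\le hR^{2\gamma}\le 1$ because $R\ge1$), on $\{\nunderbar>k\}$ use Lemma~\ref{lem:measurable}(i)--(ii) with \eqref{eq:req2} to compare $X_k$ with $X_s$ on $[t_k,t_{k+1}]$, and use $\EE[\Nn^m]\le N^{m-1}\EE[\Nn]\le N^m\pf_0$. The only cosmetic difference is that you merge the paper's two cases ($X_{t_k}\ge 2x_1$, giving $X_s\ge\half X_{t_k}$, and $X_{t_k}<2x_1$) into the single inequality $X_{t_k}\le 2(X_s\vee x_1)$.
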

\begin{proof}
On $\Bb_k\cap\Aa_k$,  $X_k\le2(X_{t_k}\vee x_1)$
by Lemma~\ref{lem:measurable}(i) and Lemma~\ref{lem:measurable}(ii) gives $\tau^{*}>t_{k+1}$ and $|Z_s-Z_{t_k}|\le\frac{3\lambda}{2}$ for $s\in[t_k,t_{k+1}]$.
If $X_{t_k}\ge2x_1$ and $X_s<\half X_{t_k}$ for some such~$s$, then $Z_{t_k}-Z_s>\Lambda(X_{t_k})-\Lambda(\half X_{t_k}) \ge \frac{5\lambda}{2}$ by~\eqref{eq:req2} (since $X_{t_k}<2R$), 
which is a contradiction; 
hence $X_s\ge\half X_{t_k}$ on $[t_k,t_{k+1}]$ and $hX_{t_k}^{2\gamma}\le2^{2\gamma}\int_{t_k}^{t_{k+1}}X_s^{2\gamma}\D s$.
If $X_{t_k}<2x_1$, $hX_{t_k}^{2\gamma}\lesssim h$. In both cases
$h(1+X_k^{2\gamma})\ind_{\Bb_k\cap\Aa_k}\le C\int_{t_k}^{t_{k+1}}(1+X_s^{2\gamma})\D s$.
On $\Bb_k\cap\Aa_k^c$, $X_k<R$ and $h(1+X_k^{2\gamma})\le h+hR^{2\gamma}\le2$ for $h\le1$.
Summing over~$k$ gives the first claim and the second follows since $\Nn\le N$ and $\EE[\Nn]\le N\pf_0$.
\end{proof}

\subsection{Proof of Theorem~\ref{thm:main}}
The proof of the theorem hinges upon the following lemma, which strongly relies on tools developed in~\cite{song2015approximating} and~\cite{ccetin2024speeding}.

\begin{lemma}\label{lem:sync}
Under the hypotheses of Lemma~\ref{lem:measurable}, let $\Aa_{<N} := \bigcap_{n<N}\Aa_{n}$ and
$\pf:=\PP(\Aa_{<N}^{c})$. 
On $\Aa_{<N}\cap\Gg$ (with~$\Gg$ in~\eqref{eq:Gg_c}):
\begin{enumerate}[(i)]
\item if $\tau^{*}\leq T$ then $\nbar\leq N$;
\item if $\nbar\leq N$ and $\tau^{*}>T$ then $X_{\nbar}\geq R$ and~$X$ has reached
$\Lambda^{\leftarrow}(\lambda_{R}-\lambda)$ before~$T$;
\item if $\nbar>N$ and $\tau^{*}>T$ then $\nunderbar=N$.
\end{enumerate}
\end{lemma}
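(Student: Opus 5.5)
The argument is a pathwise case analysis on $\Aa_{<N}\cap\Gg$, built around the stopping index $\nunderbar$. On this event no $\Aa_n$ with $n<N$ fails, and $|\Df^z_{\nunderbar}|\le\lambda$ by~\eqref{eq:Gg_c}. Hence $\nunderbar<N$ can only be triggered by $X_{\nunderbar}\notin(0,R)$ or by $\tau^{*}\le t_{\nunderbar}$. The first step is to exclude positivity failure. By Lemma~\ref{lem:measurable}(iv) applied at $\nunderbar-1$ (on $\{\nunderbar>\nunderbar-1\}$), we get $X_{\nunderbar}\ge\half X_{\nunderbar-1}>0$. So the only possible triggers are $X_{\nunderbar}\ge R$ or $\tau^{*}\le t_{\nunderbar}$. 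The second trigger is also impossible: Lemma~\ref{lem:measurable}(ii) at $\nunderbar-1$ gives $\tau^{*}>t_{\nunderbar}$. Thus on $\Aa_{<N}\cap\Gg$ either $\nunderbar=N$ with all conditions satisfied up to $N-1$, or $\nunderbar<N$ and $X_{\nunderbar}\ge R$, in which case $\nbar=\nunderbar\le N$ (recall $\nunderbar\le\nbar$ and the scheme is not killed before $\nunderbar$).

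For (i), suppose $\tau^{*}\le T$. If $\nunderbar<N$ we are done by the dichotomy above. If $\nunderbar=N$, then Lemma~\ref{lem:measurable}(ii) at $n=N-1$ (valid since $N-1<\nunderbar$) gives $\tau^{*}>t_N=T$, a contradiction. The boundary case $\nunderbar=N$ needs care: the definition of $\nunderbar$ caps at $N$, so I would check that $\{N-1<\nunderbar\}$ is exactly what Lemma~\ref{lem:measurable}(ii) needs. For (iii), suppose $\nbar>N$ and $\tau^{*}>T$. Then $\nunderbar<N$ would force $X_{\nunderbar}\ge R$ and hence $\nbar\le N$, which is excluded, so $\nunderbar=N$.

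For (ii), suppose $\nbar\le N$ and $\tau^{*}>T$. By the dichotomy, $\nunderbar<N$ is impossible unless $X_{\nunderbar}\ge R$. If instead $\nunderbar=N$, then $X_n\in(0,R)$ for $n<N$, so $\nbar=N$ with $X_N\notin(0,R)$. Lemma~\ref{lem:measurable}(iv) at $N-1$ gives $X_N>0$, hence $X_N\ge R$. In either case $X_{\nbar}\ge R$, so $Z_{\nbar}\ge\lambda_R$. On $\Gg$ we have $\nbar=\nunderbar$ (or $\nbar=N=\nunderbar$), so $|\Df^z_{\nbar}|\le\lambda$. This gives $\widehat Z_{t_{\nbar}}\ge\lambda_R-\lambda$, and since $\tau^{*}>T$, also $Z_{t_{\nbar}}=\widehat Z_{t_{\nbar}}$. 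Therefore $X_{t_{\nbar}}\ge\Lambda^{\leftarrow}(\lambda_R-\lambda)$ with $t_{\nbar}\le T$. By continuity, $X$ has reached that level by time~$T$.

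The main obstacle is the bookkeeping in the dichotomy: making sure that $\Gg$ controls $\Df^z$ exactly at the index $\nbar$, which requires $\nbar=\nunderbar$ in the relevant cases. This hinges on showing that the scheme cannot exit $(0,R)$ through $0$ on $\Aa_{<N}$, which is precisely what Lemma~\ref{lem:measurable}(iv) provides.
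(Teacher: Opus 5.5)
Your proposal is correct and uses the same ingredients as the paper's proof. On $\Aa_{<N}\cap\Gg$ you discard the $\Aa_n$- and $\Df^z$-triggers in the definition of $\nunderbar$, then apply Lemma~\ref{lem:measurable}(ii) and (iv) at the index just before the stop to exclude $\tau^{*}\le t_{\nunderbar}$ and exit through $0$. This gives $\nunderbar=\nbar\wedge N$ with $X_{\nbar}\ge R$ whenever $\nbar\le N$, from which (i)--(iii) follow as in the paper.

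The difference from the paper is only organisational: the paper keeps $\min\{n:\tau^{*}\le t_n\}$ in~\eqref{eq:nuon_Gg} and eliminates it only inside~(i), whereas you eliminate it at the outset. One point you leave implicit is $\nunderbar\ge1$, which you need to apply Lemma~\ref{lem:measurable} at $\nunderbar-1$; it holds because no trigger fires at $n=0$ on $\Aa_{<N}$.
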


Under Assumptions~\ref{assu:osl} and~\ref{assu:LR}, $\lambda^2\sim R^{-2\gamma} = h^{2\gamma\alpha}$ with $2\gamma\alpha<1$, thus $\pf\leq N\pf_{0} = 4N\exp\{-c h^{-(1-2\gamma\alpha)}\}$ is negligible (it is of the form~$\rho_h$ of Proposition~\ref{prop:L2}).
 
\begin{proof}
Work on $\Aa_{<N}\cap\Gg$. 
On~$\Gg$, $|\Df_{n}^{z}|>\lambda$ cannot be the first to occur in the definition of~$\nunderbar$ in~\eqref{eq:nu} and therefore
\begin{equation}\label{eq:nuon_Gg}
\nunderbar = \nbar\wedge\min\{n:\tau^{*}\leq t_n\}\wedge N ,
\end{equation}
which gives~(iii). 

(i) Suppose $\tau^{*}\leq T$ and let $n_{0}<N$ be such that 
$\tau^{*}\in(t_{n_{0}},t_{n_{0}+1}]$,
so that $\min\{n:\tau^{*}\leq t_{n}\} = n_{0}+1\leq N$. 
Thus $\nunderbar = \nbar\wedge (n_{0}+1)$.
If $\nbar>n_{0}$, then $\nunderbar = n_{0}+1$
and Lemma~\ref{lem:measurable}(ii) at $n=n_{0}$ gives $\tau^{*}>t_{n_{0}+1}$, 
which is a contradiction, and thus $\nunderbar\leq n_{0}$.
Since $\nunderbar = \nbar\wedge (n_{0}+1)$, this implies $\nbar\leq n_{0}<N$.

(ii) Suppose $\nbar\leq N$ and $\tau^{*}>T$. By~\eqref{eq:nuon_Gg}, $\nunderbar=\nbar$, and Lemma~\ref{lem:measurable}(iv) at $n=\nbar-1<\nunderbar$ gives $X_{\nbar}\geq \half X_{\nbar-1}>0$, so $X_{\nbar} \geq R$, $Z_{\nbar}$ and $\Df^{z}_{\nbar}$ are defined, with $|\Df^{z}_{\nbar}| \leq \lambda$ on $\Gg$. 
Since $Z_{\nbar} \geq \lambda_{R}$, $\widehat{Z}_{t_{\nbar}} = Z_{t_{\nbar}} \geq \lambda_{R}-\lambda$, that is 
$X_{t_{\nbar}}\geq \Lambda^{\leftarrow}(\lambda_{R}-\lambda)$, which is~(ii).
\end{proof}

\begin{proof}[Proof of Theorem~\ref{thm:main}]
Under Assumptions~\ref{assu:osl}-\ref{assu:LR}, 
$hB_R\lesssim hR^{\gamma} = \sqrt{h}(hR^{2\gamma})^{\half}\le\lambda\sim R^{-\gamma}$ as $R\uparrow\infty$ and \eqref{eq:req1}-\eqref{eq:req2} hold for all~$h$ small enough, so  Lemmas~\ref{lem:measurable}-\ref{lem:sync} and Proposition~\ref{prop:L2} apply and $\pf\le N\pf_0=\rho_h$.
Fix $p:=(1+\gamma)(1+\delta)>1$, so that $\EE[X_T^p]<\infty$ by Assumption~\ref{assu:mom}, and let~$p'$ be its conjugate exponent.
Write $\Df := X^h_N-X_T$.
Since $\tau^{*} = \tau_{2R}$, then
\begin{equation}\label{eq:XtauStarleqT}
\EE\left[X_T\ind_{\{\tau^{*}\leq T\}}\right]
= \Tt(2R) \leq C R^{-\theta}\ell(R),
\end{equation}
by Assumption~\ref{assu:theta}.
Denote $\Ee := \Aa_{<N}\cap\Gg$;
since $|\Df| \leq X_N^h + X_{T}$ and
$X_N^h \in [0,R)$, H\"older's inequality gives
$\EE\left[|\Df|\ind_{\Ee^c}\right]
 \leq R \PP(\Ee^c)
  + \|X_T\|_{L^p}\PP(\Ee^c)^{\frac{1}{p'}}$.
By Markov's inequality and Proposition~\ref{prop:L2}, 
for any $q\ge2$,
\begin{align}\label{eq:Lambda}
\PP(\Ee^c)
\le\PP(\Aa_{<N}^c)+\PP(\Gg^c)
\le N\pf_0+\lambda^{-q}\EE\left[\max_{n\le N}|\Df^z_{n\wedge\nunderbar}|^q\right]
 & \lesssim \left(\frac{h}{\lambda^2}\right)^{\frac{q}{2}} + \rho_h\nonumber\\
 & \lesssim h^{\frac{q(1-2\gamma\alpha)}{2}}+\rho_h,
\end{align}
because $\lambda^2\gtrsim R^{-2\gamma} = h^{2\gamma\alpha}$. Since $1-2\gamma\alpha>0$, choosing~$q$ with $\frac{q(1-2\gamma\alpha)}{2}\ge\half+\alpha$ and $\frac{q(1-2\gamma\alpha)}{2p'}\ge\half$ gives
$\EE[|\Df|\ind_{\Ee^c}]\lesssim\sqrt{h}$.
Write~$\Ee$ as the disjoint union $\Ee = \Ee_1\cup\Ee_2\cup\Ee_3$ with
$$
\Ee_1 := \Ee\cap\{\tau^{*}\leq T\},\qquad
\Ee_2 := \Ee\cap\{\tau^{*}>T, \nbar\leq N\},\qquad
\Ee_3 := \Ee\cap\{\tau^{*}>T,
\nbar>N\}.
$$
On $\Ee_{1}$, 
$\nbar\leq N$ by Lemma~\ref{lem:sync}(i), so $X^h_N=0$, $|\Df|=X_T$. 
By~\eqref{eq:XtauStarleqT},
$\EE[|\Df|\ind_{\Ee_{1}}]\lesssim R^{-\theta}\ell(R)$.

Consider now~$\Ee_{2}$.
Take $R' = \Lambda^{\leftarrow}(\lambda_{R}-\lambda)$ to be the level of Lemma~\ref{lem:sync}(ii): by~\eqref{eq:req2} at $y=R\in[2x_1,2R]$, $\Lambda(R)-\Lambda(\frac{R}{2})\geq \frac{5\lambda}{2}>\lambda$, so $R'>\frac{R}{2}$. 
Hence, by Assumption~\ref{assu:theta}, monotonicity of~$\Tt$ and $\frac{R}{2}\ge2x_{0}$,
$\EE\left[X_T\ind_{\{\tau_{R'}\leq T\}}\right]
= \Tt(R') \leq \Tt\left(\frac{R}{2}\right)\leq CR^{-\theta}\ell(R)$.
On~$\Ee_{2}$, 
again $X^h_N=0$ and $|\Df|=X_T$, and by Lemma~\ref{lem:sync}(ii) $X$ reaches~$R'$ before~$T$, 
that is $\Ee_{2}\subset\{\tau_{R'}\leq T\}$,
and therefore
$\EE[|\Df|\ind_{\Ee_{2}}]\leq CR^{-\theta}\ell(R)$.

On~$\Ee_{3}$, $X^h_N=X_N\in(0,R)$ and $X_T\in(0,2R)$, with $\widehat{Z}_T=Z_T=\Lambda(X_T)$. 
By Lemma~\ref{lem:sync}(iii), $\nunderbar=N$, hence $\Df^{z}_N = \Df^{z}_{N\wedge\nunderbar}$ and, on~$\Gg$, $|\Df^{z}_N|\leq \lambda$.
We first show $X_N\le2(X_T\vee x_1)$. 
Otherwise $X_N>2x_1$ and $X_N>2X_T$; 
since $X_N\in[2x_1,2R]$, \eqref{eq:req2} gives
$\Df^{z}_N=\Lambda(X_N)-\Lambda(X_T)\geq \Lambda(X_N)-\Lambda(\frac{X_{N}}{2})\geq \frac{5\lambda}{2}>\lambda$, contradicting $|\Df^{z}_N|\leq \lambda$.
Now, with $\overline{\sigma}(x) := \sup_{z\leq x}\sigma(z)$, 
Assumption~\ref{assu:osl} implies that 
there exists $C_{2}>0$ such that
\begin{equation}\label{eq:sigmabarleq}
\overline{\sigma}(x)\leq C_1 + \sigma(x)
\qquad\text{and}\qquad
\overline{\sigma}(2x) \leq C_{2}\left(1 + \overline{\sigma}(x)\right),
\qquad\text{for all }x>0,
\end{equation}
with $C_1:=\sup_{(0,x_1]}\sigma\le C_0x_1$.
By the mean value theorem applied to~$\Lambda^{\leftarrow}$
(using $(\Lambda^{\leftarrow})' \equiv \sigma\circ\Lambda^{\leftarrow}$), we then have
\begin{align*}
|\Df| = |\Lambda^{\leftarrow}(Z_N)-\Lambda^{\leftarrow}(Z_T)|
 & \leq \overline{\sigma}(X_N\vee X_T) |\Df^{z}_N|
\leq \overline{\sigma}(2(X_T\vee x_1))|\Df^{z}_N|\\
 & \leq C_2(1+\overline{\sigma}(X_T\vee x_1))|\Df^{z}_N|
\leq C(1+\sigma(X_T))|\Df^{z}_N|,
\end{align*}
since~$\overline{\sigma}$ is non-decreasing, \eqref{eq:sigmabarleq}
 and $\overline{\sigma}\leq C_1 + \sigma$. 
By H\"older's inequality with the exponents $1+\delta$ and $q:=\frac{1+\delta}{\delta}\ge2$, Assumption~\ref{assu:mom} and Proposition~\ref{prop:L2},
$$
\EE\left[|\Df|\ind_{\Ee_{3}}\right]\lesssim  \left\|1+\sigma(X_T)\right\|_{L^{1+\delta}}
\EE\left[\max_{n\le N}\left|\Df^{z}_{n\wedge\nunderbar}\right|^q\right]^{\frac{1}{q}}
\lesssim \sqrt{h}+\rho_h^{\frac{1}{q}}.
$$
Adding the different steps gives~\eqref{eq:main2}.
The two choices of~$\alpha$ follow by comparing $\theta\alpha$ with~$\half$ under the constraint $\alpha<\frac{1}{2\gamma}$: if $\theta>\gamma$ then $\frac{1}{2\theta}<\frac{1}{2\gamma}$ and $\theta\alpha=\half$ at $\alpha=\frac{1}{2\theta}$; otherwise $\theta\alpha<\frac{\theta}{2\gamma}\le\half$ for every admissible~$\alpha$.
\end{proof}

\subsection{Proof of Corollary~\ref{cor:Lp}}
For $1<p<\min\{1+\delta,p^{*}\}$, with the notation from the proof of Theorem~\ref{thm:main},
$\|\Df\|_{L^p}\le\|\Df\ind_{\Ee^c}\|_{L^p}+\|\Df\ind_{\Ee_1\cup\Ee_2}\|_{L^p}+\|\Df\ind_{\Ee_3}\|_{L^p}$.
On~$\Ee^c$, $|\Df|\le R+X_T$, so that for $p<p_1<p^{*}$, by H\"older's inequality and~\eqref{eq:Lambda},
$\|\Df\ind_{\Ee^c}\|_{L^p}\le R \PP(\Ee^c)^{\frac{1}{p}}+\|X_T\|_{L^{p_1}}\PP(\Ee^c)^{\frac{1}{p}-\frac{1}{p_1}}\lesssim\sqrt{h}$ for~$q$ large enough.
On $\Ee_1\cup\Ee_2$, $|\Df|=X_T$ and $\Ee_1\cup\Ee_2\subset\{\tau_{R'}\le T\}\subset\{\tau_{R/2}\le T\}$ (with~$R'$ as in the proof of the theorem), so that
$\|\Df\ind_{\Ee_1\cup\Ee_2}\|_{L^p}\le\EE[X_T^p\ind_{\{\tau_{R/2}\le T\}}]^{\frac{1}{p}}$.
On~$\Ee_3$, $|\Df|\le C(1+\sigma(X_T))|\Df^z_{N\wedge\nunderbar}|$, and H\"older's inequality with the exponents $\frac{1+\delta}{p}$ and $q:=\frac{p(1+\delta)}{1+\delta-p}\ge2$ (which requires $p<1+\delta$) together with Proposition~\ref{prop:L2} give
$\|\Df\ind_{\Ee_3}\|_{L^p}\lesssim\|1+\sigma(X_T)\|_{L^{1+\delta}}\EE[\max_n|\Df^z_{n\wedge\nunderbar}|^q]^{\frac{1}{q}}\lesssim\sqrt{h}$.

\bibliography{references}
\bibliographystyle{siam}

\end{document}

%% file: packages.tex
\usepackage[margin=4cm]{geometry}
\usepackage{amsmath, amsfonts, amsthm, amssymb, mathtools}
\usepackage{mathrsfs}
\usepackage{subfig}
\usepackage{stmaryrd}
\usepackage{verbatim}
\usepackage{hyperref}
\usepackage{xcolor}
\usepackage{graphicx}
\usepackage{algorithmic, algorithm}
\usepackage{enumerate}
\usepackage{listings}
\usepackage[foot]{amsaddr}
\usepackage{braket}

\graphicspath{{Figures/}}

\numberwithin{equation}{section}
\numberwithin{figure}{section}
\numberwithin{table}{section}
\usepackage[textsize=tiny]{todonotes}

\makeatletter
\def\@tocline#1#2#3#4#5#6#7{\relax
  \ifnum #1>\c@tocdepth 
  \else
    \par \addpenalty\@secpenalty\addvspace{#2}%
    \begingroup \hyphenpenalty\@M
    \@ifempty{#4}{%
      \@tempdima\csname r@tocindent\number#1\endcsname\relax
    }{%
      \@tempdima#4\relax
    }%
    \parindent\z@ \leftskip#3\relax \advance\leftskip\@tempdima\relax
    \rightskip\@pnumwidth plus4em \parfillskip-\@pnumwidth
    #5\leavevmode\hskip-\@tempdima
      \ifcase #1
       \or\or \hskip 1em \or \hskip 2em \else \hskip 3em \fi%
      #6\nobreak\relax
    \dotfill\hbox to\@pnumwidth{\@tocpagenum{#7}}\par
    \nobreak
    \endgroup
  \fi}
\makeatother

\hypersetup{colorlinks,linkcolor=blue!55!black,citecolor=blue!55!black,urlcolor=blue!55!black}

%% file: commands.tex
\mathtoolsset{showonlyrefs=true}
\newtheorem{theorem}{Theorem}[section]
\newtheorem{corollary}[theorem]{Corollary}
\newtheorem{lemma}[theorem]{Lemma}
\newtheorem{proposition}[theorem]{Proposition}

\theoremstyle{definition}
\newtheorem{definition}[theorem]{Definition}
\newtheorem{remark}[theorem]{Remark}

\newtheorem{assumption}[theorem]{Assumption}

\newcommand{\ind}{1\hspace{-2.1mm}{1}} 
\newcommand{\half}{\frac{1}{2}} 

\newcommand{\RR}{\mathbb{R}}

\newcommand{\PP}{\mathbb{P}}
\newcommand{\QQ}{\mathbb{Q}}
\newcommand{\EE}{\mathbb{E}}

\newcommand{\Aa}{\mathcal{A}}
\newcommand{\Bb}{\mathcal{B}}
\newcommand{\Cc}{\mathcal{C}}

\newcommand{\Ee}{\mathcal{E}}
\newcommand{\Ff}{\mathcal{F}}
\newcommand{\Gg}{\mathcal{G}}

\newcommand{\Nn}{\mathcal{N}}
\newcommand{\Oo}{\mathcal{O}}

\newcommand{\Tt}{\mathcal{T}}

\newcommand{\D}{\mathrm{d}}
\newcommand{\E}{\mathrm{e}}

\newcommand{\eps}{\varepsilon}

\newcommand{\mf}{\mathfrak{m}}
\newcommand{\sfr}{\mathfrak{s}}

\newcommand{\CEV}{\mathrm{CEV}}
\newcommand{\BES}{\mathrm{BES}}
\newcommand{\df}{\mathfrak{d}}

\newcommand{\pf}{\mathfrak{p}}
\newcommand{\lf}{\mathfrak{l}}
\newcommand{\rf}{\mathfrak{r}}
\newcommand{\Df}{\mathfrak{D}}

\newcommand{\nbar}{\overline{n}}
\newcommand{\nunderbar}{\underline{n}}
\newcommand{\cbar}{\overline{c}_{\sigma}}